\newcommand{\vx}{\mathbf{x}}
\newcommand{\ve}{\mathbf{e}}
\newcommand{\vy}{\mathbf{y}}
\newcommand{\va}{\mathbf{a}}
\newcommand{\vv}{\mathbf{v}}
\newcommand{\vw}{\mathbf{w}}
\newcommand{\vu}{\mathbf{u}}
\newcommand{\vz}{\mathbf{z}}
\newcommand{\vg}{\mathbf{g}}
\newcommand{\vh}{\mathbf{h}}
\newcommand{\vnull}{\mathbf{0}}
\newcommand{\valpha}{\boldsymbol\alpha}
\newcommand{\veps}{\boldsymbol\varepsilon}
\definecolor{author}{rgb}{0.5,0.5,0.0}
\definecolor{def}{rgb}{0.0,0.5,0.5}
\definecolor{high}{rgb}{0.1,0.2,0.5}
\definecolor{darkgreen}{rgb}{0.0,0.4,0.0}
\definecolor{darkred}{rgb}{0.4,0.0,0.0}
\newcommand{\diag}{\mathrm{diag}}
\newcommand{\Kn}{\mathcal{K}^n}
\newcommand{\Kon}{\mathcal{K}_o^n}
\newcommand{\Kgn}{\mathcal{K}_c^n}
\DeclareMathOperator{\vol}{vol}
\DeclareMathOperator{\inte}{int}
\DeclareMathOperator{\conv}{conv}
\DeclareMathOperator{\affo}{aff_o}
\DeclareMathOperator{\lin}{lin}
\newcommand{\R}{\mathbb{R}}
\newcommand{\Z}{\mathbb{Z}}
\newcommand{\N}{\mathbb{N}}
\newcommand{\Q}{\mathbb{Q}}
\newcommand{\C}{\mathbb{C}}
\newcommand{\bone}{\mathbf{1}}
\DeclareMathOperator{\sur}{F}
\newcommand{\ip}[2]{\left\langle #1,#2\right\rangle}
\newcommand{\dlat}{\mathrm{d}}
\newcommand{\e}{\varepsilon}
\DeclareMathOperator{\lE}{g}
\newcommand{\D}{\mathcal{D}}
\DeclareMathOperator{\cen}{cen}
\DeclareMathOperator{\GL}{GL}
\DeclareMathOperator{\Dee}{D}
\DeclareMathOperator{\Dmin}{D}
\newtheorem{theorem}{Theorem}[section]
\newtheorem{corollary}[theorem]{Corollary}
\newtheorem{lemma}[theorem]{Lemma}
\newtheorem{example}[theorem]{Example}
\newtheorem{remark}[theorem]{Remark}
\newtheorem{conjecture}{Conjecture}[section]
\newtheorem{proposition}[theorem]{Proposition}
\newtheorem{fact}[theorem]{Fact}
\numberwithin{equation}{section}
\begin{document}

\title[On extensions of Minkowski's theorem on successive minima]{On extensions of \\Minkowski's theorem on successive minima}

\author{Martin Henk}
\address{Fakult\"at f\"ur Mathematik, Otto-von-Guericke
Universit\"at Mag\-deburg, Universit\"atsplatz 2, D-39106 Magdeburg,
Germany} \email{martin.henk@ovgu.de}

\author{Matthias Henze}

\address{Institut f\"ur Informatik, Freie Universit\"at Berlin, Takustrasse 9, 14195 Berlin, Germany}
\email{matthias.henze@fu-berlin.de}

\author{Mar\'\i a A. Hern\'andez Cifre}
\address{Departamento de Matem\'aticas, Universidad de Murcia, Campus de
Espinar\-do, 30100-Murcia, Spain} \email{mhcifre@um.es}

\keywords{Minkowski's 2nd Theorem, successive minima, centroid, volume, surface area, lattice
surface area, convex body, rational polytope} \subjclass[2010]{52A40, 52A20,
52B11, 52C07}

\thanks{First and third authors were partially supported by MINECO-FEDER project MTM2012-34037.
The second author was partially supported by the DFG projects He 2272/4-1 and RO 2338/5-1.}

\begin{abstract}
Minkowski's 2nd theorem in the Geometry of Numbers provides optimal upper
and lower bounds for the volume of a $o$-symmetric convex body in terms of
its successive minima. In this paper we study extensions of this theorem
from two different points of view: either relaxing the symmetry condition,
assuming that the centroid of the set lies at the origin, or replacing the
volume functional by the surface area.
\end{abstract}

\maketitle

\section{Introduction}

Let $\Kn$ be the set of all convex bodies, i.e., compact convex sets, in
the $n$-dimensional Euclidean space $\R^n$ with non-empty interior. Let
$\ip{\,\cdot}{\cdot}$ and $\|\cdot\|$  be the standard inner product and
the Euclidean norm in $\R^n$, respectively. We denote by $\Kon\subset \Kn$
the set of all $o$-symmetric bodies, i.e., those $K\in\Kn$ satisfying
$K=-K$, and let $\Kgn\subset \Kn$ be the set of all convex bodies with
centroid at the origin, i.e.,
\begin{equation*}
 \cen(K)=\frac{1}{\vol(K)}\int_K\vx\,{\rm d}^n\vx =\vnull.
\end{equation*}
Here, ${\rm d}^n\vx$ means integration with respect to  the  $n$-dimensional
Lebesgue-measure and $\vol(K)=\int_K {\rm d}^n\vx$ is the volume of $K$.
The surface area of $K\in\Kn$ is denoted  $\sur(K)$, and for general
information on the theory of convex bodies we refer to
\cite{Gr,Schneider:2014td}.

We denote by $\Z^n$ the integer lattice, i.e., the lattice of all points
with integral coordinates in $\R^n$. Then any lattice $\Lambda\subset
\R^n$ of rank $n$ can be obtained as $\Lambda=B\Z^n$ with $B\in\GL(n,\R)$,
and the determinant of the lattice is defined as $\det\Lambda=|\det B|$.
As general references for lattices we refer to \cite{Gr,GL}.

For $K\in \Kon\cup \Kgn$  and a lattice $\Lambda$ of rank $n$, let
\begin{equation*}
 \lambda_i(K,\Lambda)=\min\bigl\{\lambda>0 : \dim(\lambda K\cap \Lambda)\geq
 i\bigr\}
\end{equation*}
be the $i$-th successive minimum of $K$ with respect to $\Lambda$, $1\leq
i\leq n$. Minkowski's 2nd theorem on successive minima
\cite{minkowski1896geometrie} (cf.~\cite{Gr}) states that for $K\in\Kon$,
\begin{equation}
\frac{1}{n!}\prod_{i=1}^n \frac{2}{\lambda_i(K,\Lambda)} \leq
\frac{\vol(K)}{\det\Lambda} \leq \prod_{i=1}^n
\frac{2}{\lambda_i(K,\Lambda)}.
\label{eq:minkowski}
\end{equation}
Both bounds are best possible; for instance, for $\Lambda=\Z^n$, the upper
bound is attained for the cube $C_n=[-1,1]^n$ and the lower bound for its
polar body, the cross-polytope $C_n^\star=\conv\{\pm\ve_i : 1\leq i\leq
n\}$, where $\ve_i$ denotes the $i$-th canonical unit vector, and $\conv
S$ is the convex hull of a set~$S$.

Other special convex bodies that will appear throughout the paper are the
standard simplex $S_n=\conv\{\vnull,\ve_1,\ldots,\ve_n\}$ and its
homothetic copy $T_n=-\bone+(n+1)S_n$, where $\bone=(1,\dots,1)^\intercal$
is the all-one-vector.

It is well known that via the difference body $\D K=K-K\in\Kon$,
Minkowski's results \eqref{eq:minkowski} can be generalized  to arbitrary
bodies (see, e.g., \cite[p.~59]{GL}):
\begin{equation}
\frac{1}{n!}\prod_{i=1}^n \frac{1}{\lambda_i(\D K,\Lambda)} \leq
\frac{\vol(K)}{\det\Lambda} \leq \prod_{i=1}^n
\frac{1}{\lambda_i(\D K,\Lambda)}.
\label{eq:minkowski_gen}
\end{equation}
The upper bound is a combination of the upper bound in
\eqref{eq:minkowski} and the Brunn-Minkowski inequality (see e.g.
\cite[Thm.~8.1]{Gr}). The lower bound stems from  the following well-known
fact (see~\cite[Thm.~2]{betkehenk1993approx} or~\cite{groemer1966zus}).
\begin{remark}
Let $\vv_1,\vw_1,\ldots,\vv_n,\vw_n\in K$. Then, the volume of $K$ is at
least the volume of the $o$-symmetric cross-polytope $\conv\bigl\{\pm
(1/2) (\vv_i-\vw_i) : 1\leq i\leq n\bigr\}$. \label{obs:cross}
\end{remark}

In particular,  both bounds in \eqref{eq:minkowski_gen} can only be
realized for $K\in \Kon$, and so they do not provide much more information
than Minkowski's original result for $o$-symmetric convex bodies.
Therefore, we are interested in a variant of~\eqref{eq:minkowski} that
does not rely on the symmetrization~$\D K$. As a first result we obtain the
following lower bound whose proof is given in Section~\ref{s:non-sym}.

\begin{theorem}\label{thm:vol_main}
Let $K\in\Kgn$ and let $\Lambda$ be a lattice of rank $n$. Then
\begin{equation*}
   \frac{n+1}{n!}\prod_{i=1}^n\frac{1}{\lambda_i(K,\Lambda)} \leq \frac{\vol(K)}{\det\Lambda}.
\end{equation*}
Equality holds if and only if there are positive numbers
$\mu_1,\ldots,\mu_n>0$ such that
$K=\conv\bigl\{\mu_1\vz_1,\ldots,\mu_n\vz_n,-\left(\mu_1\vz_1+\dots+\mu_n\vz_n\right)\bigr\}$,
where $\{\vz_1,\ldots,\vz_n\}$ is a basis of~$\Lambda$.
\end{theorem}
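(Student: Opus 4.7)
The plan is to reduce the claim to a one-dimensional moment inequality for slice volumes and then handle that by combining Brunn--Minkowski concavity with the centroid constraint.

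Pick linearly independent $\vz_1,\ldots,\vz_n \in \Lambda$ with $\vz_i \in \lambda_i(K,\Lambda)\,K$, so the points $\vv_i := \vz_i/\lambda_i(K,\Lambda)$ lie in $K$. Since $\vol(K)/\det\Lambda$ is invariant under any $A \in \GL(n,\R)$, apply $A$ that sends $\vz_i \mapsto \ve_i$ and then scale by $\diag(\lambda_1,\ldots,\lambda_n)$ to obtain $\tilde K \in \Kgn$ containing $\ve_1,\ldots,\ve_n$. Here $\det(A\Lambda) \leq 1$ with equality iff $\vz_1,\ldots,\vz_n$ is a $\Z$-basis of $\Lambda$, so it suffices to prove $\vol(\tilde K) \geq (n+1)/n!$ and to trace back equality. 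Slice $\tilde K$ by the hyperplanes $H_t = \{\vx : x_1+\cdots+x_n = t\}$ and set $g(t) := \vol_{n-1}(\tilde K \cap H_t)$. Then $g^{1/(n-1)}$ is concave on its support $[\ell_{\min}, \ell_{\max}]$ by Brunn--Minkowski; $g(1) \geq \sqrt n/(n-1)!$ since the slice at $t=1$ contains $\conv\{\ve_1,\ldots,\ve_n\}$; and $\int t\,g(t)\,dt = 0$ by the centroid hypothesis. Fubini gives $\vol(\tilde K) = (1/\sqrt n)\int g(t)\,dt$, so the theorem reduces to showing $\int g(t)\,dt \geq \sqrt n\,(n+1)/n!$ under these three conditions.

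The candidate extremal profile, matching $\tilde K = \conv\{\ve_1,\ldots,\ve_n,-\bone\}$, is $g(t) = (\sqrt n/(n-1)!)\bigl((t+n)/(n+1)\bigr)^{n-1}$ on $[-n,1]$. Concavity of $g^{1/(n-1)}$ forces the chord minorant $g(t) \geq g(1)\bigl((t-\ell_{\min})/(1-\ell_{\min})\bigr)^{n-1}$ on $[\ell_{\min},1]$, whose integral alone gives the target when $\ell_{\min} \leq -n$. The delicate regime is $\ell_{\min} \in (-n,-1/n]$, with the upper bound coming from the Minkowski--Radon inequality applied to the form $\ell(\vx) = x_1+\cdots+x_n$ on $\tilde K$: here the naive cone estimate is insufficient, and the centroid condition must force the excess of $g$ over the chord minorant to carry enough mass on the negative side; a tent-majorant comparison using concavity of $g^{1/(n-1)}$ converts this into the missing increment of $\int g$. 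Equality throughout forces $g^{1/(n-1)}$ to be linear on $[-n,1]$, hence $\tilde K = \conv\{\ve_1,\ldots,\ve_n,-\bone\}$; undoing the scaling and the transformation $A$ recovers $K = \conv\{\mu_1\vz_1,\ldots,\mu_n\vz_n,-\sum_i \mu_i\vz_i\}$ with $\mu_i = 1/\lambda_i$, and the estimate $\det(A\Lambda) \leq 1$ tightening to equality forces $\vz_1,\ldots,\vz_n$ to form a basis of $\Lambda$. The hardest step will be the moment inequality in the delicate regime $\ell_{\min} \in (-n,-1/n]$, where neither Brunn--Minkowski alone nor the cone estimate alone suffices and the two must be carefully combined through the vanishing of $\int t\,g(t)\,dt$.
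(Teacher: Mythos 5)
Your reduction is sound and is in substance the paper's own: choosing $\vz_i\in\lambda_i K\cap\Lambda$, normalizing so that the body contains $\ve_1,\ldots,\ve_n$, and noting $\det(A\Lambda)\le 1$ with equality exactly for a basis reduces everything to showing that $\tilde K\in\Kgn$ with $\ve_1,\ldots,\ve_n\in\tilde K$ has $\vol(\tilde K)\ge (n+1)/n!$, which is precisely Lemma~\ref{lem:main}. The genuine gap is that your proof of this core statement is not carried out: after passing to the profile $g(t)=\vol_{n-1}(\tilde K\cap H_t)$, the regime $\ell_{\min}\le -n$ is handled by the chord minorant (and, as you note, never uses the centroid), but in the regime $\ell_{\min}\in(-n,-1/n]$ --- which is where the entire difficulty of the theorem lives --- you only name a ``tent-majorant comparison'' without specifying it or showing it yields the missing mass. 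As written this is a plan with its crucial step open, not a proof. The one-dimensional claim you need is true, and the paper proves exactly it, in your notation as follows: discard the part of $\tilde K$ with $\ip{\bone}{\vx}>1$ (the removed mass sits at levels $\ge 1$, so the truncated profile still has first moment $\le 0$); compare the truncated body with the cone over the level-$1$ slice having the same volume and apex at some level $\beta$. Since the truncated $g^{1/(n-1)}$ is concave, the cone profile is affine, both agree at the base level $1$ and enclose equal volume, a single-crossing argument (Gr\"unbaum's, as in the proof of Lemma~\ref{lem:main}) shows the cone's centroid level is at most that of the truncated body, hence at most $0$; since the cone's centroid lies at level $(n+\beta)/(n+1)$, this forces $\beta\le-n$, and then the truncated volume equals $g(1)(1-\beta)/(n\sqrt n)\ge (n+1)/n!$. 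Some such argument (or a worked-out substitute) is indispensable; Brunn--Minkowski, the slice bound $g(1)\ge\sqrt n/(n-1)!$ and the vanishing moment by themselves do not finish your middle regime without it.

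Two smaller omissions in the equality discussion: from ``$g^{1/(n-1)}$ linear on $[-n,1]$'' you still need to argue that the level-$1$ slice equals $\conv\{\ve_1,\ldots,\ve_n\}$ (containment plus equal $(n\!-\!1)$-volume), that $\tilde K$ is the cone over this slice with a single apex at level $-n$ (the cone is contained in $\tilde K$ and has the same slice volumes at every level), and only then does the full vector centroid condition place the apex at $-\bone$. Also, the ``if'' direction of the characterization is never addressed: for $K=\conv\{\mu_1\vz_1,\ldots,\mu_n\vz_n,-(\mu_1\vz_1+\cdots+\mu_n\vz_n)\}$ with $\{\vz_i\}$ a basis one must verify $\lambda_i(K,\Lambda)=1/\mu_i$ (after ordering the $\mu_i$), as the paper does at the end of its proof of Theorem~\ref{thm:vol_main}.
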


A corresponding upper bound on the volume of $K\in\Kgn$ immediately
relates to a longstanding conjecture of Ehrhart~\cite{Ehr2} on the maximal
volume of a convex body $K\in\Kgn$ whose interior is free from non-zero
lattice points. In this context, the best-known bound is based on a result
of Milman and Pajor~\cite{Milman:2000th} showing that $\vol(K)\leq
2^n\vol\bigl(K\cap(-K)\bigr)$ for $K\in\Kgn$. Hence with
\eqref{eq:minkowski} applied to $K\cap (-K) \subseteq K$, we find
\begin{equation}
\frac{\vol(K)}{\det\Lambda} \leq 4^n\prod_{i=1}^n
\frac{1}{\lambda_i(K,\Lambda)},
\label{eq:milmanpajorehrhart}
\end{equation}
and in view of Ehrhart's conjecture (see Conjecture \ref{conj:ehrhart})
the optimal factor is conjectured to be $(n+1)^n/n!$ instead of $4^n$. In
Propositions~\ref{prop:ehr_dim2} and~\ref{prop:ehr_simpl}, we verify this
conjecture for the special cases $n=2$ and simplices of arbitrary
dimension, respectively.

Another direction of extending Minkowski's 2nd theorem is to replace the
volume functional by other functionals, for instance, the lattice point
enumerator (see, e.g., \cite{Betke:1993jn, Malikiosis:2010tb,
Malikiosis:2010vi}) or the intrinsic volumes (see, e.g.,
\cite{henk1990inequ, Schnell1993}). Here we are interested in inequalities
analogous to \eqref{eq:minkowski} for the surface area. In
\cite{henk1990inequ} it was shown $\sur(K)/\vol(K)> \lambda_n(K,\Z^n)$ for
$K\in\Kon$, and with the lower bound in \eqref{eq:minkowski} we get
\begin{equation*}
   \sur(K)> \frac{2^{n}}{n!} \prod_{i=1}^{n-1}\frac{1}{\lambda_i(K,\Z^n)}.
\end{equation*}
In order to present our improvement on this bound,
we need the notation of the elementary symmetric functions
\begin{equation*}
 \sigma_k(\rho_1,\ldots,\rho_n)=\sum_{\substack{J\subseteq\{1,\ldots,n\}\\\#J=k}}\,\prod_{i\in J}\rho_i,
\end{equation*}
for $k\in\{1,\ldots,n\}$, and real numbers $\rho_1,\ldots,\rho_n$.

\begin{theorem}\label{thm:surface}
Let $K\in\Kon$ and let $\lambda_i=\lambda_i(K,\Z^n)$, $1\leq i\leq n$.
Then
\begin{equation*}
   \sur(K)\geq \frac{2^{n}}{(n-1)!}
   \sqrt{\sigma_{n-1}\left(\lambda_1^{-2},\dots,\lambda_n^{-2}\right)},
\end{equation*}
and equality holds if and only if
$K=\diag(\lambda_1^{-1},\dots,\lambda_n^{-1})C_n^\star$,
where $\diag(\cdot)$ denotes the diagonal matrix.
\end{theorem}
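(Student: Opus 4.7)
The plan combines a Cauchy--Schwarz bound that controls $\sur(K)$ by the $(n-1)$-volumes of the coordinate projections $K|\ve_i^\perp$ with Minkowski's 2nd theorem applied to each such projection. Starting from Cauchy's projection identity
\begin{equation*}
   \vol_{n-1}(K|\ve_i^\perp) \;=\; \tfrac{1}{2}\int_{S^{n-1}}|u_i|\,dS_{n-1}(K,u),
\end{equation*}
where $S_{n-1}(K,\cdot)$ denotes the surface area measure of $K$, the Cauchy--Schwarz inequality gives $4\vol_{n-1}(K|\ve_i^\perp)^2\le\sur(K)\int u_i^2\,dS_{n-1}(K,u)$. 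Summing over $i=1,\ldots,n$ and using $\sum_iu_i^2=1$ on $S^{n-1}$ yields
\begin{equation*}
  \sur(K)^2 \;\ge\; 4\sum_{i=1}^n\vol_{n-1}(K|\ve_i^\perp)^2.
\end{equation*}

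To bound each projection volume, I would pick linearly independent lattice vectors $\vv_1,\ldots,\vv_n\in\Z^n$ with $\vv_j\in\lambda_jK$, and let $\vv_j^*$ be the orthogonal projection of $\vv_j$ onto $\ve_i^\perp$; these lie in $\lambda_j(K|\ve_i^\perp)$ as well as in the sublattice $\Z^n\cap\ve_i^\perp$, which has determinant $1$. The unique linear relation among the $n$ projections in $\ve_i^\perp$ has coefficients $(d_{i1},\ldots,d_{in})$ equal to those of the expansion $\ve_i=\sum_j d_{ij}\vv_j$; in particular, $\{\vv_j^*:j\neq k\}$ is linearly independent exactly when $d_{ik}\neq 0$. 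Choosing any such $k=\tau(i)$ and applying Minkowski's 2nd theorem to the $o$-symmetric body $K|\ve_i^\perp$ with the lattice $\Z^n\cap\ve_i^\perp$ then gives
\begin{equation*}
  \vol_{n-1}(K|\ve_i^\perp) \;\ge\; \frac{2^{n-1}}{(n-1)!\,\prod_{j\neq\tau(i)}\lambda_j}.
\end{equation*}

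The main obstacle is to arrange that the $\tau(i)$'s form a \emph{permutation} of $\{1,\ldots,n\}$. Setting $D=[d_{ij}]=V^{-1}$ where $V$ has columns $\vv_1,\ldots,\vv_n$, this amounts to finding a permutation $\tau$ with $d_{i\tau(i)}\neq 0$ for all $i$, and this is guaranteed by Hall's marriage theorem: if Hall's condition failed there would exist an $r\times s$ zero submatrix of $D$ with $r+s>n$, forcing $r$ rows of the invertible matrix $D$ into a subspace of dimension $n-s<r$. Once $\tau$ is a permutation, the identity $\sum_i\prod_{j\neq\tau(i)}\lambda_j^{-2}=\sigma_{n-1}(\lambda_1^{-2},\ldots,\lambda_n^{-2})$ (invariance of the sum under the relabeling) combined with the Cauchy--Schwarz step of the first paragraph proves the inequality. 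For the equality case, equality in Cauchy--Schwarz forces each $|u_i|$ to be $S_{n-1}(K,\cdot)$-a.e.\ constant, which together with equality in each projection application of Minkowski's 2nd theorem identifies $K$ with $\diag(\lambda_1^{-1},\ldots,\lambda_n^{-1})C_n^\star$ (up to an appropriate relabeling of coordinates).
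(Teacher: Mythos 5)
Your proof of the inequality is correct, and it takes a genuinely different route from the paper. The paper inscribes the affine cross-polytope $Z\,\diag(\lambda_1^{-1},\dots,\lambda_n^{-1})C_n^\star\subseteq K$, where $Z$ has columns $\vz_i\in\lambda_iK\cap\Z^n$, uses monotonicity of the surface area and Fact~\ref{obs:facetvolume}, and reduces everything to the matrix inequality $\sum_{\veps}\Vert Z\valpha_{\veps}\Vert\geq 2^n$ for integral $Z$ (Lemma~\ref{lem:matrix}), which is proved by iterated Steiner-symmetrization estimates plus a weighted AM--GM on the Gram determinants $\det(Z_i^\intercal Z_i)$. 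You instead work with coordinate projections: Cauchy's projection formula and Cauchy--Schwarz give $\sur(K)^2\geq 4\sum_i\vol_{n-1}(K|\ve_i^\perp)^2$; each projection volume is bounded below by the lower bound in \eqref{eq:minkowski} applied in $\ve_i^\perp$, using that the orthogonal projection of $\Z^n$ along a coordinate direction lands in the determinant-one lattice $\Z^n\cap\ve_i^\perp$ and that $\{\vv_j^*:j\neq k\}$ is independent exactly when the $(i,k)$ entry of $V^{-\intercal}$ is nonzero (you wrote $V^{-1}$; this transposition is harmless for the Frobenius--K\"onig/Hall step); the permutation $\tau$ then makes the sum of the products collapse to $\sigma_{n-1}(\lambda_1^{-2},\dots,\lambda_n^{-2})$, and the constants match. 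This trades the paper's symmetrization lemma for classical integral-geometric tools, and your cofactor/Hall step plays the role of the paper's matrix $|\det Z|\,Z^{-\intercal}$.

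The equality case, however, is only gestured at, and there is a genuine gap. Equality does force (i) each $|u_i|$ to be $S_{n-1}(K,\cdot)$-a.e.\ constant, so $K$ is a polytope whose facet normals lie in the finite set $\{(\pm c_1,\dots,\pm c_n)\}$ with all $c_i>0$, and (ii) equality in each application of Minkowski's lower bound in $\ve_i^\perp$, which -- via the inscribed-cross-polytope proof of that bound, whose equality characterization is not stated in the paper and would have to be spelled out -- says that $K|\ve_i^\perp=\conv\{\pm\vv_j^*/\lambda_j:j\neq\tau(i)\}$ with $\{\vv_j^*:j\neq\tau(i)\}$ a basis of $\Z^n\cap\ve_i^\perp$. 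But your final sentence asserts, rather than derives, that (i) and (ii) "identify" $K$: from these facts alone it is not clear why $K$ could not be some other polytope with normals in that set (it need not a priori have all $2^n$ facets), nor why the matrix $V$ must be a signed permutation matrix. What is missing is the quantitative use of the equality data: from the Cauchy--Schwarz equality one gets $2\vol_{n-1}(K|\ve_i^\perp)=c_i\sur(K)$, hence $c_i$ proportional to $\lambda_{\tau(i)}$; from equality in the projections one gets that the corresponding $(n-1)\times(n-1)$ minors of $V$ are $\pm1$ and that the projections of $K$ coincide with the projections of the inscribed cross-polytope $\conv\{\pm\vv_j/\lambda_j\}$; and one must then lift this information back to $K$ itself (e.g., by reconstructing $K$ from its support values in the $2^n$ admissible normal directions, or by lifting vertices of the projections to vertices of $K$) to conclude $K=\diag(\lambda_1^{-1},\dots,\lambda_n^{-1})C_n^\star$ up to a relabeling of coordinates. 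Until that bridge is built, the equality statement is unproved; the inequality itself stands.
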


The proof of this result is given  in Section~\ref{s:F(K)}.
Generalizations to arbitrary lattices are not so straightforward as those
for the volume functional because the surface area is not
$\mathrm{SL}(n,\R)$-invariant. Still we obtain meaningful results in the
general situation that are presented in Theorem~\ref{cor:surface_arblat}.
We also note  that the above inequality has the same structure as the one
in~\cite[Thm.~1.3]{henkcifre2008intrinsic}, where the surface area is
related to the successive inner radii of a convex body.

In general, we cannot expect to find upper bounds on $\sur(K)$, or on the
quotient $\sur(K)/\vol(K)$,  in terms of $\lambda_i(K,\Z^n)^{-1}$ as
Example \ref{exam:no_ub_sur} shows.

Hence, in order to obtain upper bounds, the convex bodies need to have
more  lattice structure,  and this leads to the class of  rational
polytopes. Here,~$P$ is called a rational polytope if all its vertices lie
in $\Q^n$.  For basic facts and notions about polytopes we refer to
\cite{Ziegler:1995gl}. Given a rational polytope $P\in\Kn$ and a facet
$F$, we denote by $\affo(F)$ the $(n-1)$-dimensional linear subspace
parallel to the affine hull of $F$. We observe, by the rationality of~$P$,
that the intersection $\affo(F)\cap\Z^n$ is an $(n-1)$-dimensional
lattice. With this notation, the lattice surface area can be described as
\[
\lE_{n-1}(P)=\frac12\sum_{F\textrm{ facet of
}P}\frac{\vol_{n-1}(F)}{\det\bigl(\affo(F)\cap\Z^n\bigr)},
\]
where  $\vol_{n-1}(\cdot)$ is the $(n\!-\!1)$-dimensional volume in
$\R^{n-1}$.
The notation $\lE_{n-1}(P)$ is taken from Ehrhart theory,
where the lattice surface area of a lattice
polytope $P$, i.e., all vertices lie in~$\Z^n$, appears as the coefficient
of order $n-1$ in its Ehrhart polynomial
\begin{equation}
\#(kP\cap\Z^n)=\sum_{i=0}^n\lE_i(P)k^i,\quad k\in\N.
\label{eq:ehrhartpoly}
\end{equation}
For details and more information on Ehrhart theory, we refer
to~\cite{beckrobins2007computing}.

For $o$-symmetric lattice polytopes, and actually for $o$-symmetric rational
polytopes, it was shown  \cite[Thm.~1.2]{henkschuerwills2005ehrhart}
that
\begin{align}
\frac{\lE_{n-1}(P)}{\vol(P)}&\leq\frac12\sum_{i=1}^n\lambda_i(P,\Z^n).\label{thm:HSW_cs}
\end{align}
Equality holds, for example, for $C_n$ and $C_n^\star$.
Here we extend and complement  this result by providing  bounds
for  all rational polytopes as well as  for rational  polytopes with
centroid at the origin.

\begin{theorem} \ \label{thm:HSW_nonsymmetric}
\begin{enumerate}[i)]
 \item Let $P\in\Kn$ be a rational polytope. Then
\[
\frac{\lE_{n-1}(P)}{\vol(P)}\leq\frac{n+1}{2}\sum_{i=1}^n\lambda_i(\D
P,\Z^n),
\]
and the standard simplex $S_n$ shows that the inequality is best possible.
 \item Let $P\in\Kgn$ be a rational polytope and let $n\geq2$. Then
\[
\frac{\lE_{n-1}(P)}{\vol(P)}<\frac{n}{2}\sum_{i=1}^n\lambda_i(P,\Z^n).
\]
\end{enumerate}
\end{theorem}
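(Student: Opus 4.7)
The plan is to apply \eqref{thm:HSW_cs} to the $o$-symmetric rational body $\D P$ and transfer the resulting bound back to $P$. Since \eqref{thm:HSW_cs} provides $\lE_{n-1}(\D P)/\vol(\D P) \leq \tfrac{1}{2}\sum_i \lambda_i(\D P,\Z^n)$, the heart of Part (i) will be the transfer inequality
\[
\frac{\lE_{n-1}(P)}{\vol(P)} \;\leq\; (n+1)\,\frac{\lE_{n-1}(\D P)}{\vol(\D P)}.
\]
Both sides are translation-invariant in $P$, so I would first translate so that $\cen(P)=\vnull$. The classical centroid relation $-P\subseteq nP$ then yields $\D P\subseteq(n+1)P$ and $h_P(u)\geq h_{\D P}(u)/(n+1)$ in every direction $u$. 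Combined with the standard identity $\vol(P)=\tfrac{1}{n}\sum_F h_P(u_F)\vol_{n-1}(F)$, this produces the global estimate $\sum_F h_{\D P}(u_F)\vol_{n-1}(F)\leq n(n+1)\vol(P)$. On the facet side, each facet $F$ of $P$ with primitive outer normal $a_F$ gives rise to a parallel facet $F_{\D P}(a_F)=F_P(a_F)-F_P(-a_F)$ of $\D P$ containing a translate of $F$, hence $\vol_{n-1}(F_{\D P}(a_F))\geq\vol_{n-1}(F_P(a_F))$ by Brunn--Minkowski. Suitably balancing these two ingredients in a facet-by-facet comparison is meant to deliver the transfer inequality. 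Equality for $P=S_n$ is verified by a direct computation showing that both sides of Part (i) equal $n(n+1)/2$.

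For Part (ii), the crucial additional input is the inclusion $\tfrac{n+1}{n}P \subseteq \D P$ for any $P\in\Kgn$: writing $\tfrac{n+1}{n}p = p+\tfrac{p}{n}$ for $p\in P$, one has $-\tfrac{p}{n}\in P$ by $-P\subseteq nP$, so $\tfrac{n+1}{n}p \in P+(-P)=\D P$. This yields $\lambda_i(\D P,\Z^n)\leq\tfrac{n}{n+1}\lambda_i(P,\Z^n)$ for every $i$, and substituting into Part (i) produces the desired bound $\tfrac{n}{2}\sum_i\lambda_i(P,\Z^n)$. Strictness for $n\geq 2$ is established by combining three observations: for any $o$-symmetric centered $P$ one has $\D P=2P$ and hence $\lambda_i(\D P)=\tfrac{1}{2}\lambda_i(P)<\tfrac{n}{n+1}\lambda_i(P)$; for the centered simplex $T_n$ (and its scalings), where Part (i) is tight, a direct computation gives $\lambda_i(\D T_n,\Z^n)=\tfrac{1}{n+1}\lambda_i(T_n,\Z^n)$, again strictly less than $\tfrac{n}{n+1}\lambda_i(T_n,\Z^n)$; and in all remaining cases Part (i) itself is strict.

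The main obstacle is pinning down the sharp constant $n+1$ in the transfer inequality of Part (i). The naive combination of Rogers--Shephard $\vol(\D P)\leq\binom{2n}{n}\vol(P)$ with the elementary bound $\lE_{n-1}(\D P)\geq 2\lE_{n-1}(P)$ only yields factor $\tfrac{1}{4}\binom{2n}{n}$, which already exceeds $n+1$ for $n\geq 3$; moreover both auxiliary bounds are saturated by the simplex, leaving no slack to exploit in that direction. Obtaining the sharp constant therefore requires a globally balanced facet-level argument that directly exploits the centroid condition and the Brunn--Minkowski behaviour of the facets $F_{\D P}(a_F)=F_P(a_F)-F_P(-a_F)$, rather than invoking $\D P$ and $P$ only through isolated volume and $\lE_{n-1}$ comparisons.
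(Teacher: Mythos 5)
Your route stands or falls with the transfer inequality $\lE_{n-1}(P)/\vol(P)\le(n+1)\,\lE_{n-1}(\D P)/\vol(\D P)$, which you acknowledge you have not established; unfortunately it is not merely unproven but false. Take $n=3$ and $P=\conv\{\vnull,2\ve_1,\ve_2,\ve_3\}$. Then $\vol(P)=1/3$ and $\lE_2(P)=3/2$ (the facets with primitive normals $-\ve_2,-\ve_3$ have $\vol_2(F)/\det\bigl(\affo(F)\cap\Z^3\bigr)$ equal to $1$ each, those with normals $-\ve_1$ and $(1,2,2)$ equal to $1/2$ each), so $\lE_2(P)/\vol(P)=9/2$. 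On the other hand $\vol(\D P)=\binom{6}{3}\vol(P)=20/3$, and summing $\vol_2(F)/\det\bigl(\affo(F)\cap\Z^3\bigr)$ over the $14$ facets of $\D P$ gives $\lE_2(\D P)=7$: the $8$ facets parallel to facets of $\pm P$ contribute $6$ in total, and the $6$ parallelogram (edge-minus-edge) facets, with normals $\pm(0,1,1),\pm(1,0,2),\pm(1,2,0)$, contribute $4+2+2$. Hence $(n+1)\,\lE_2(\D P)/\vol(\D P)=21/5<9/2=\lE_2(P)/\vol(P)$, and more generally $P=\conv\{\vnull,N\ve_1,\ve_2,\ve_3\}$ violates your inequality for every $N\ge2$. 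The theorem itself is safe for this body, since $\lambda_1(\D P,\Z^3)=1/2$, $\lambda_2=\lambda_3=1$ give the bound $5>9/2$; the point is that \eqref{thm:HSW_cs} applied to $\D P$ is strict exactly where your chain would need it to be tight. So no ``globally balanced facet-by-facet'' refinement can rescue the plan: any argument that reaches the successive minima only through the two quantities $\lE_{n-1}(\D P)/\vol(\D P)$ and $\vol(\D P)/\vol(P)$ discards the information of \emph{which} lattice vectors attain the minima, and that information has to be played off against the individual facets of $P$ itself.

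That is what the paper's proof does, and it is structurally different from your proposal: after translating the (rational) centroid to $\vnull$, write $P=\bigl\{\vx:\ip{\va_j}{\vx}\le b_j\bigr\}$ with primitive integral normals, so $\lE_{n-1}(P)=\frac12\sum_j\vol_{n-1}(F_j)/\Vert\va_j\Vert$; the inclusion $\D P\subseteq(n+1)P$ together with integrality yields $(n+1)b_j\ge1/\lambda_k$ whenever $\va_j$ is not orthogonal to the span $L_k$ of the first $k$ minima vectors of $\D P$; grouping facets by the smallest such $k$, telescoping, and invoking the cone-volume inequality of Theorem~\ref{thm:cone-volume} ($\vol(P)\ge\frac nk\sum_{\va_j\in L_k}\vol(\conv\{\vnull,F_j\})$) gives i). Part ii) is the same argument with $P\subseteq-nP$ in place of $\D P\subseteq(n+1)P$, strictness coming from the fact that $nb_j\ge\ip{\va_j}{\vz_i}/\lambda_i$ cannot be an equality when $\ip{\va_j}{\vz_i}\ge0$ and $n\ge2$. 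In your write-up, ii) is obtained by substituting into i) via the (correct and nice) inclusion $\frac{n+1}{n}P\subseteq\D P$ for $P\in\Kgn$, so it inherits the gap; moreover your strictness argument is not a proof, since the assertion that ``in all remaining cases Part (i) itself is strict'' presupposes a characterization of the equality case of i) that you do not have.
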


In Section~\ref{s:latticeF(K)}, we discuss the proofs of these results, and
moreover we show that the factor $n/2$ in the second inequality is almost
tight. Now combining  the above bounds on
$\lE_{n-1}(P)/\vol(P)$ with the upper bounds in \eqref{eq:minkowski},
\eqref{eq:minkowski_gen}, or \eqref{eq:milmanpajorehrhart}, we
immediately get
\begin{corollary}
Let $P\in\Kn$ be a rational polytope.
\begin{enumerate}[i)]
\item Then
\begin{equation*}
\lE_{n-1}(P)\leq\frac{n+1}{2}\,\sigma_{n-1}\left(\frac{1}{\lambda_1(\D
    P,\Z^n)},\ldots,\frac{1}{\lambda_n(\D P,\Z^n)}\right).
\end{equation*}
\item If $P\in\Kgn$, then
\begin{equation*}
\lE_{n-1}(P)<4^n\frac{n}{2}\,\sigma_{n-1}\left(\frac{1}{\lambda_1(P,\Z^n)},\ldots,\frac{1}{\lambda_n(P,\Z^n)}\right).
\end{equation*}
\item If $P\in\Kon$, then
\begin{equation*}
\lE_{n-1}(P)\leq 2^{n-1}\,\sigma_{n-1}\left(\frac{1}{\lambda_1(P,\Z^n)},\ldots,\frac{1}{\lambda_n(P,\Z^n)}\right).
\end{equation*}
\end{enumerate}
\end{corollary}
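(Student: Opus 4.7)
The corollary is advertised as an immediate consequence of combining the bounds on $\lE_{n-1}(P)/\vol(P)$ from Theorem~\ref{thm:HSW_nonsymmetric} and \eqref{thm:HSW_cs} with the volume upper bounds \eqref{eq:minkowski}, \eqref{eq:minkowski_gen}, \eqref{eq:milmanpajorehrhart}. My plan is to verify each of the three items by this two-step route, the only real ingredient beyond multiplication being the elementary identity
\[
\Bigl(\sum_{i=1}^n \lambda_i\Bigr)\prod_{j=1}^n \frac{1}{\lambda_j} \;=\; \sum_{i=1}^n \prod_{j\neq i}\frac{1}{\lambda_j} \;=\; \sigma_{n-1}\!\left(\frac{1}{\lambda_1},\ldots,\frac{1}{\lambda_n}\right),
\]
which converts the arithmetic-sum expressions coming from the $\lE_{n-1}/\vol$ estimates into the symmetric-function form demanded by the statement.

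For part i), I take Theorem~\ref{thm:HSW_nonsymmetric} i) and multiply both sides by $\vol(P)$, then invoke the upper bound in \eqref{eq:minkowski_gen} with $\Lambda=\Z^n$ to replace $\vol(P)$ by $\prod_i 1/\lambda_i(\D P,\Z^n)$. Applying the identity above with $\lambda_i=\lambda_i(\D P,\Z^n)$ yields exactly the claimed bound with the factor $(n+1)/2$. For part ii), I proceed analogously, using Theorem~\ref{thm:HSW_nonsymmetric} ii) together with \eqref{eq:milmanpajorehrhart}; here the volume bound contributes the extra $4^n$ factor and the strict inequality is preserved from ii). For part iii), I combine \eqref{thm:HSW_cs} with the $o$-symmetric upper bound in \eqref{eq:minkowski}; the factor $\prod_i 2/\lambda_i(P,\Z^n)=2^n\prod_i 1/\lambda_i(P,\Z^n)$ from \eqref{eq:minkowski} multiplied with the $1/2$ from \eqref{thm:HSW_cs} gives the final coefficient $2^{n-1}$.

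There is no genuine obstacle, since each piece is already stated and the algebraic bridge is a single symmetric-function identity. The only small care-points are (a) to apply \eqref{eq:minkowski_gen}, \eqref{eq:milmanpajorehrhart}, \eqref{eq:minkowski} with the correct symmetry assumption in each case (general $\Kn$, $\Kgn$, and $\Kon$ respectively), and (b) to track that the strict inequality in ii) of Theorem~\ref{thm:HSW_nonsymmetric} propagates through the multiplication by the (non-strict) Milman--Pajor/Minkowski bound, which is fine because $\vol(P)>0$.
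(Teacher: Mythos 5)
Your proposal is correct and is exactly the paper's intended argument: multiply the bounds on $\lE_{n-1}(P)/\vol(P)$ from Theorem~\ref{thm:HSW_nonsymmetric} (and \eqref{thm:HSW_cs} in the symmetric case) by the corresponding volume upper bounds \eqref{eq:minkowski_gen}, \eqref{eq:milmanpajorehrhart}, \eqref{eq:minkowski}, and convert via the identity $\bigl(\sum_i\lambda_i\bigr)\prod_j\lambda_j^{-1}=\sigma_{n-1}\bigl(\lambda_1^{-1},\ldots,\lambda_n^{-1}\bigr)$. Your care-points about matching symmetry assumptions and preserving strictness in ii) are exactly the right (and only) checks needed.
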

However, only the last inequality is best possible, which has been pointed
out before in~\cite{henkschuerwills2005ehrhart}. Further immediate
consequences of Theorem \ref{thm:HSW_nonsymmetric} are  relations between
the roots of the Ehrhart polynomial --  when we regard the right hand side
of \eqref{eq:ehrhartpoly} as a formal polynomial in a complex variable --
and the successive minima (cf.~Corollary~\ref{cor:roots_appl}). Those kind
of relations were  the main motivation for \eqref{thm:HSW_cs} in
\cite{henkschuerwills2005ehrhart}.

Finally, we remark that in contrast to the surface area, we now cannot expect
lower bounds on $\lE_{n-1}(P)$ in terms of the successive minima as
shown in Example~\ref{exam:no_lb_lsur}.

\section{Volume bounds for $K\in\Kgn$}\label{s:non-sym}

In this section, we discuss a variant of \eqref{eq:minkowski} for the
class of convex bodies having their centroid at the origin, i.e., for $K\in\Kgn$.
A basic and beautiful result in this context is
Gr\"unbaum's halfspace theorem. For a hyperplane~$H$, we denote by $H^+$
and $H^-$ the two associated halfspaces.

\begin{theorem}[Gr\"unbaum,~\cite{gruenbaum1960parts}]\label{thm:gruenbaum}
Let $K\in \Kn$ and let $H^+$ be a halfspace containing the centroid of $K$. Then
\begin{equation*}
 \vol(K\cap H^+)\geq \left(\frac{n}{n+1}\right)^n\vol(K).
\end{equation*}
\end{theorem}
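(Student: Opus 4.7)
The plan is to reduce to a one-dimensional extremal problem about concave profiles via symmetrization, and then identify cones as the extremizers.

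First, two standard reductions. Translate so that $\cen(K)=\vnull$. If the origin lies in the interior of $H^+$, sliding the bounding hyperplane of $H^+$ parallel to itself toward the origin only decreases $\vol(K\cap H^+)$, so it suffices to prove the inequality when $\vnull\in\bd H^+$. Choose coordinates so that $H^+=\{\vx\in\R^n:x_n\geq 0\}$. Now apply Schwarz symmetrization about the $x_n$-axis, replacing each cross-section $K\cap\{x_n=t\}$ by the $(n-1)$-ball centered on the axis having the same $(n-1)$-volume. The resulting body $K^*$ is convex by the Brunn-Minkowski inequality, has the same volume and the same centroid as $K$, and satisfies $\vol(K^*\cap H^+)=\vol(K\cap H^+)$. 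Hence I may assume that $K$ is a body of revolution about the $x_n$-axis.

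Let $r:[a,b]\to[0,\infty)$ denote the cross-section radius, with $a\leq 0\leq b$. Convexity of $K$ implies $r$ is concave on $[a,b]$, the centroid condition reads $\int_a^b t\,r(t)^{n-1}\,dt=0$, and the goal becomes
\begin{equation*}
\int_0^b r(t)^{n-1}\,dt \;\geq\; \left(\frac{n}{n+1}\right)^{\!n}\int_a^b r(t)^{n-1}\,dt.
\end{equation*}
I claim that, over all concave nonnegative $r$ on $[a,b]$ subject to the centroid constraint, the minimum of the ratio $\int_0^b r^{n-1}\big/\int_a^b r^{n-1}$ is achieved exactly when $r$ is affine on $[a,b]$ with $r(b)=0$, i.e., when $K$ is a right cone with apex on the positive side of $H$. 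Granting this, the centroid constraint $\int_a^b t(b-t)^{n-1}\,dt=0$ integrates to $b=-na$, so $K\cap H^+$ is a sub-cone of $K$ similar with linear scaling factor $b/(b-a)=n/(n+1)$, yielding $\vol(K\cap H^+)=(n/(n+1))^n\vol(K)$ with equality precisely for the cone.

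The delicate step is identifying the cone as the extremizer. I would carry this out by a rearrangement/variational argument: given any concave $r$, construct an affine competitor $\tilde r$ that agrees with $r$ at the origin and whose support and endpoint values are chosen so that the centroid constraint is preserved; concavity of $r$ on $[0,b]$ forces $r$ to lie above its chord there, and a parallel comparison on $[a,0]$ combined with the strict $(n-1)$-power non-linearity of the integrand can be arranged so that the ratio for $\tilde r$ is no larger than that for $r$. Iterating reduces the problem to piecewise-affine, and ultimately purely affine, profiles, where the explicit computation above yields the factor $(n/(n+1))^n$. The main technical obstacle is organizing this two-sided exchange so that it respects the centroid constraint at every step and monotonically decreases the ratio; with this variational reduction in place, the exact extremal value and equality case fall out from the direct cone computation.
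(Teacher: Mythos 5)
Your reductions are fine: translating the centroid to the origin, sliding the bounding hyperplane to pass through it, and Schwarz-symmetrizing about the normal axis are all standard and correctly justified, and your identification of the extremal body as a cone with apex on the positive side (with the computation $b=-na$ giving the constant $\bigl(n/(n+1)\bigr)^n$) is correct. Note that the paper does not prove this theorem --- it quotes it from Gr\"unbaum --- but the relevant technique is visible in the paper's proof of Lemma~2.3, and measured against either that or Gr\"unbaum's original argument, your proposal has a genuine gap at exactly the point you flag as ``delicate'': no actual argument is given that the cone minimizes the ratio $\int_0^b r^{n-1}\big/\int_a^b r^{n-1}$, and the sketched two-sided exchange does not work as described. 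The single-crossing comparison between a concave profile and a linear one agreeing with it at $t=0$ and having the same integral of the $(n-1)$-st power shows that the concave profile dominates near $0$ and is dominated far from $0$; applied on the positive side this pushes mass \emph{up} (centroid increases), but applied on the negative side it pushes mass \emph{down} (centroid decreases). So a simultaneous replacement on both sides that ``preserves the centroid constraint at every step and monotonically decreases the ratio'' is precisely what single-crossing does \emph{not} give you, and the proposed ``iteration to piecewise-affine profiles'' is neither a well-defined operation nor one with an evident limit.

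The standard way to close this gap is asymmetric. Replace only $K\cap H^+$ by the cone over the section $K\cap H$ with apex at $\rho\ve_1$, $\rho>0$ chosen to preserve $\vol(K\cap H^+)$; by the single-crossing argument (this is the $\gamma_P\geq\gamma_{\widehat L}$ step in the paper's proof of Lemma~2.3) the centroid moves in the $+$ direction. Then extend that same cone downward to $t=\alpha$ so that the total volume equals $\vol(K)$. The crossing on the positive side forces the cone's linear profile to have slope at $0^+$ no smaller than $r'(0^+)$, hence, by concavity, its extension dominates $r$ pointwise on the negative side; since it also carries the same mass there, mass again only moves upward and the centroid increases on the negative side as well. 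One ends up with an honest cone $\widetilde C$ having the same volume as $K$ on each side of $H$ and $\cen(\widetilde C)\cdot\ve_1\geq 0$; the explicit centroid formula $\bigl(n\alpha+\rho\bigr)/(n+1)\geq 0$ then yields $\rho/(\rho-\alpha)\geq n/(n+1)$ and hence $\vol(K\cap H^+)=\bigl(\rho/(\rho-\alpha)\bigr)^n\vol(K)\geq\bigl(n/(n+1)\bigr)^n\vol(K)$. Without this (or some other complete variational argument in its place), your write-up asserts the theorem's essential content rather than proving it.
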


Our first aim is to prove Theorem~\ref{thm:vol_main}, which is
an immediate consequence of the following lemma.
\begin{lemma}
Let $K\in\Kgn$ and let $\vu_1,\dots,\vu_n\in K$ be linearly independent.
Then
\begin{equation*}
  \vol(K)\geq \frac{n+1}{n!}\, \bigl|\det(\vu_1,\dots,\vu_n)\bigr|.
\end{equation*}
Equality holds if and only if $K=\conv\bigl\{\vu_1,\dots,\vu_n,
-(\vu_1+\cdots +\vu_n)\bigr\}$. \label{lem:main}
\end{lemma}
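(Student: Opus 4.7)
The plan is to prove Lemma~\ref{lem:main} by a slicing argument along the direction $\bone=(1,\ldots,1)$, in the spirit of Gr\"unbaum's proof of Theorem~\ref{thm:gruenbaum} but with the extra constraint that $K$ contains the $\vu_i$.

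First I reduce to a normalized form: since both $\vol(K)/|\det(\vu_1,\ldots,\vu_n)|$ and the centroid condition are $\GL(n,\R)$-equivariant, applying a suitable linear map allows me to assume $\vu_i=\ve_i$ for all $i$, so $|\det(\vu_1,\ldots,\vu_n)|=1$ and the target becomes $\vol(K)\geq (n+1)/n!$ with claimed extremizer $T:=\conv\{\ve_1,\ldots,\ve_n,-\bone\}$. I then slice $K$ by the functional $\phi(\vx)=\ip{\bone}{\vx}$ and set $W(t):=\vol_{n-1}(K\cap\phi^{-1}(t))$. Four facts combine: (a) by Brunn's theorem, $W^{1/(n-1)}$ is concave on its support; (b) the coarea formula, with $\|\nabla\phi\|=\sqrt{n}$, gives $\vol(K)=\tfrac{1}{\sqrt{n}}\int W(t)\,\mathrm{d}t$; (c) the centroid condition $\cen(K)=\vnull$ projected along $\bone$ gives $\int t\,W(t)\,\mathrm{d}t=0$; and (d) $W(1)\geq \sqrt{n}/(n-1)!$, since $K\cap\phi^{-1}(1)\supseteq\conv\{\ve_1,\ldots,\ve_n\}$, whose $(n-1)$-volume equals $\sqrt{n}/(n-1)!$.

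The problem then reduces to a one-dimensional extremal problem for $f:=W^{1/(n-1)}$: minimize $\int f(t)^{n-1}\,\mathrm{d}t$ over concave nonnegative $f$ satisfying $f(1)\geq\kappa:=(\sqrt{n}/(n-1)!)^{1/(n-1)}$ and $\int t\,f(t)^{n-1}\,\mathrm{d}t=0$. A first-variation argument (or a chord-comparison), analogous to Gr\"unbaum's original proof, should force the minimizer to be affine on its support with one vanishing endpoint; the centroid equation then pins the support to $[-n,1]$, giving the extremizer $f_0(t)=\kappa(t+n)/(n+1)$, for which $\int f_0^{n-1}\,\mathrm{d}t=\kappa^{n-1}(n+1)/n=\sqrt{n}(n+1)/n!$, hence $\vol(K)\geq(n+1)/n!$. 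For the equality case, saturation forces $W=f_0^{n-1}$, so in particular $W(1)=\sqrt{n}/(n-1)!$ and $K\cap\phi^{-1}(1)=\conv\{\ve_1,\ldots,\ve_n\}$; the equality case of Brunn's theorem then forces every slice $K\cap\phi^{-1}(t)$ for $t\in[-n,1]$ to be a homothet of the facet, shrinking to a single point at $t=-n$. Pinning down the translations using the centroid condition identifies this apex as $-\bone$, giving $K=T$.

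The main obstacle is the constrained one-dimensional minimization: the admissible set of concave nonnegative $f$ is nonlinear, and a naive Lagrangian analysis reveals that any minimizer must lie on the boundary of the concave cone, i.e., be affine on its support. Handling the endpoint behaviour, the centroid constraint, and the equality case of Brunn's theorem precisely and jointly is the most delicate step.
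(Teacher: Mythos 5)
Your reduction is sound and is, after the change of coordinates, the same reduction the paper performs: with $\vu_i=\ve_i$ your slicing direction $\bone$ is exactly the paper's $\ve_1$ (there the $\vu_i$ are normalized to lie in $\{x_1=-1\}$), and your profile function $W$ is just the analytic packaging of the paper's Schwarz symmetrization. Facts (a)--(d) are correct, and your arithmetic identifying the candidate extremal profile $f_0$ on $[-n,1]$ and the value $(n+1)/n!$ checks out. The difficulty is that after this reduction the entire content of the lemma sits in the constrained one-dimensional minimization, and you do not prove it: you assert that a first-variation or chord-comparison argument ``should force'' the minimizer to be affine with one vanishing endpoint, and you yourself flag this as the most delicate step. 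As written this is a genuine gap, not a routine verification: (i) existence of a minimizer is not addressed (the admissible class has no a priori compactness before one controls the support); (ii) there is no variable in which the problem is convex with linear constraints --- in $W=f^{n-1}$ the objective and the barycenter constraint are linear but the set of $(1/(n-1))$-concave profiles is not convex for $n\ge 3$, while in $f$ the constraint $\int t f(t)^{n-1}\,\mathrm{d}t=0$ is nonlinear --- so a naive Lagrangian/first-variation argument does not apply off the shelf; (iii) you must also rule out profiles whose support extends beyond $t=1$, and the equality case needs \emph{uniqueness} of the minimizer (you need $W=f_0^{n-1}$ exactly, support exactly $[-n,1]$, and $W(1)=\sqrt{n}/(n-1)!$), none of which follows from the stated characterization.

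The paper closes precisely this step by an elementary comparison rather than a variational argument, and you could adapt it verbatim to your normalization. First truncate at the hyperplane $\{\vx:\ip{\bone}{\vx}=1\}$: discarding the part of the (symmetrized) body with $\ip{\bone}{\vx}\ge 1$ only decreases the volume and moves the $\bone$-coordinate of the centroid of the remainder to a value $\le 0$. Then replace the truncated body by the cone over its slice $F$ at $\ip{\bone}{\vx}=1$ having the same volume, and show by a single-crossing argument (the affine cone profile and the concave profile agree at the base, have equal integrals, hence cross exactly once) that the apex-side centroid coordinate of the cone is at least that of the truncated body; this is the paper's proof that $\gamma_P\ge\gamma_{\widehat{L}}$, and its strictness when the two bodies differ is exactly what drives the equality case. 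Finally, the pyramid identity $\vol(P)=(n+1)\vol\bigl(\conv\{F,\cen(P)\}\bigr)$ together with $\vol\bigl(\conv\{F,\vnull\}\bigr)\ge 1/n!$ (your fact (d) restated) gives $\vol(K)\ge (n+1)/n!$. If you carry out this comparison, or give a complete proof of your one-dimensional extremal claim including existence, the support analysis and uniqueness, your argument becomes a correct proof, essentially identical to the paper's.
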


\begin{proof}
Via a suitable linear transformation, we may assume
that all the vectors $\vu_i$ have first
coordinate equal to~$-1$ and that $\det(\vu_1,\dots,\vu_n)=1$. For $t\in\R$, let
$H_t=\bigl\{\vx\in\R^n:\ip{\ve_1}{\vx}=t\bigr\}$ be the family of
hyperplanes orthogonal to the first unit vector $\ve_1$.

As in Gr\"unbaum's proof of Theorem~\ref{thm:gruenbaum}, we first apply
Schwarz-sym\-metrization to $K$ with respect to  $\lin\{\ve_1\}$, the
linear hull of $\ve_1$ (see e.g. \cite[Sect.~9.3]{Gr}). Denoting by $B_n$
the $n$-dimensional unit ball, this means that for every $t\in\R$, we
replace $K\cap H_t$ by the ($n-1$)-ball $t\ve_1+r(t)(B_n\cap H_0)$ with
center $t\ve_1$ and having the same volume as $K\cap H_t$, i.e.,
\[
r(t)=\left(\frac{\vol_{n-1}(K\cap
H_t)}{\vol_{n-1}(B_{n-1})}\right)^{1/(n-1)}.
\]
The so created convex body $L$, say, is symmetric with respect to
$\lin\{\ve_1\}$, and we also have $\vol(L)=\vol(K)$ and
$\cen(L)=\vnull$. With  $F=L\cap H_{-1}$, we get by the choice of the
vectors $\vu_i$
\begin{equation}
  \vol\bigl(\conv\{F,\vnull\}\bigr)\geq\vol\bigl(\conv\{\vnull,\vu_1,\dots,\vu_n\}\bigr)=\frac{1}{n!}.
\label{eq:elementar}
\end{equation}
Now let $\widehat{L}=K\cap\bigl\{\vx\in\R^n:\ip{\ve_1}{\vx}\geq -1\bigr\}$
and let $\beta>0$ be such that the pyramid $P=\conv\{F,\beta\,\ve_1\}$ has
the same volume as $\widehat{L}$. Since $\widehat{L}$ and $P$ are
symmetric with respect to $\ve_1$ their centroids are on the line
$\lin\{\ve_1\}$ and so we may write  $\cen(P)=\gamma_P\,\ve_1$,
$\cen(\widehat{L})=\gamma_{\widehat{L}}\,\ve_1$ for suitable numbers
$\gamma_P, \gamma_{\widehat{L}}$ with $\gamma_{\widehat{L}}\geq 0$.  For
pyramids we have  $\vol(P) =
  (n+1)\vol\bigl(\conv\{F,\gamma_P\,\ve_1\}\bigr)$ (see
e.g.~\cite[Sect.~34]{bonnesenfenchel1987theory}) and so in view of
\eqref{eq:elementar}
\begin{equation}
\begin{split}
  \vol(K) = &\vol(L)\geq\vol(\widehat{L})=\vol(P)\\ = &
  (n+1)\vol\bigl(\conv\{F,\gamma_P\,\ve_1\}\bigr)\geq \frac{n+1}{n!}(1+\gamma_P).
\end{split}
\label{eq:pyramid}
\end{equation}
It remains to show $\gamma_P\geq 0$. Actually we will
show~$\gamma_P\geq\gamma_{\widehat{L}}$, which seems to be quite evident.
But since we also want to discuss the equality case, we present a proof.

We may assume $\widehat{L}\neq P$. For $t\in\R$, let $l(t)$ be the radius
of the $(n-1)$-ball $P\cap H_t$, i.e.,
\[
l(t)=\left(\frac{\vol_{n-1}(P\cap
H_t)}{\vol_{n-1}(B_{n-1})}\right)^{1/(n-1)}.
\]
Then $l(t)\ne 0$ if and only if $t\in[-1,\beta)$ and $l(t)$ is an affine
function.

Since $r(-1)=l(-1)$ and $r(t)$ is  concave, $\widehat{L}\ne P$, and
$\vol(P)=\vol(\widehat{L})$, there exists a unique $\alpha\in(-1,\beta)$
with
\begin{equation*}
r(t)>l(t)\text{ for } t\in(-1,\alpha)\quad\text{ and }\quad
l(t)>r(t)\text{ for } t\in(\alpha,\beta).
\end{equation*}
Hence we know
\begin{equation*}
\ip{\ve_1}{\vx}\leq\alpha\text{ for }\vx\in \widehat{L}\setminus
P\quad\text{ and }\quad \ip{\ve_1}{\vx}\geq\alpha\text{ for }\vx\in
P\setminus \widehat{L}.
\end{equation*}
Finally, since $\vol(P)=\vol(\widehat{L})$ it holds
$\vol(\widehat{L}\setminus P)=\vol(P\setminus \widehat{L})$ and so we get
\begin{equation*}
 \begin{split}
\gamma_P &=\int_P\ip{\ve_1}{\vx}\dlat^n\vx
    =\int_{P\setminus \widehat{L}}\ip{\ve_1}{\vx}\dlat^n\vx+\int_{P\cap \widehat{L}}\ip{\ve_1}{\vx}\dlat^n\vx\\
 & >\alpha\vol(P\setminus \widehat{L})+\int_{P\cap \widehat{L}}\ip{\ve_1}{\vx}\dlat^n\vx
    =\alpha\vol(\widehat{L}\setminus P)+\int_{P\cap \widehat{L}}\ip{\ve_1}{\vx}\dlat^n\vx\\
 & >\int_{\widehat{L}\setminus P}\ip{\ve_1}{\vx}\dlat^n\vx
    +\int_{P\cap\widehat{L}}\ip{\ve_1}{\vx}\dlat^n\vx =\int_{\widehat{L}}\ip{\ve_1}{\vx}\dlat^n\vx=\gamma_{\widehat{L}}.
 \end{split}
\end{equation*}
Hence $\gamma_P>\gamma_{\widehat{L}}>0$ as desired, since we have assumed
$\widehat{L}\ne P$.

If we have equality, then \eqref{eq:pyramid} gives $L=\widehat{L}$ and
$\gamma_P=0$, and in view of the above argumentation we must also have
$L=P$. Since we also must have equality in \eqref{eq:elementar}, we
conclude $K\cap H_{-1}=\conv\{\vu_1,\ldots,\vu_n\}$. Let $\vu\in K$ be the
point whose image under the Schwarz-symmetrization is the apex
$\beta\,\ve_1$ of the pyramid. Since $L=P$, we have
$K=\conv\{\vu_1,\ldots,\vu_n,\vu\}$. Finally, since for a simplex the
centroid coincides with the arithmetic mean of its vertices, we get
$\vu=-(\vu_1+\vu_2+\dots+\vu_n)$.
\end{proof}

The proof of Theorem~\ref{thm:vol_main} is now an immediate consequence of
the Lemma above.

\begin{proof}[Proof of Theorem~\ref{thm:vol_main}]
We write $\lambda_i=\lambda_i(K,\Lambda)$ and let $\vz_1,\dots,\vz_n\in\Lambda$ be
linearly independent lattice points such that $\vz_i/\lambda_i\in K$,
$1\leq i\leq n$. Lemma~\ref{lem:main} applied to these vectors gives
\begin{equation*}
  \vol(K)\geq\bigl|\det(\vz_1,\dots,\vz_n)\bigr|\,\frac{n+1}{n!}\prod_{i=1}^n\frac{1}{\lambda_i}
 \geq \det\Lambda\frac{n+1}{n!}\prod_{i=1}^n\frac{1}{\lambda_i},
\end{equation*}
and equality holds if and only if
$\bigl|\det(\vz_1,\dots,\vz_n)\bigr|=\det\Lambda$, i.e.,
$\{\vz_1,\ldots,\vz_n\}$ is a basis of $\Lambda$, and
\[
K=\conv\left\{\frac{1}{\lambda_1}\vz_1,\dots,\frac{1}{\lambda_n}\vz_n,
-\Bigl(\frac{1}{\lambda_1}\vz_1+\dots+\frac{1}{\lambda_n}\vz_n\Bigr)\right\}.
\]
In order to further discuss the equality case, it is no restriction to
assume that $\Lambda=\Z^n$ and $\vz_i=\ve_i$, $1\leq i\leq n$. We write
$\inte M$ to denote the interior of a set $M$. Let
$K=\conv\bigl\{\mu_1\ve_1,\ldots,\mu_n\ve_n,-(\mu_1\ve_1+\dots+\mu_n\ve_n)\bigr\}$
for real numbers $\mu_1,\ldots,\mu_n>0$. Assuming that
$\mu_1\geq\mu_2\geq\dots\geq\mu_n$, we see that
$\inte\bigl((1/\mu_i)K\bigr)\cap\Z^n\subset\lin\{\ve_1,\ldots,\ve_{i-1}\}$
and $\ve_i\in(1/\mu_i)K$, for $1\leq i\leq n$. It means that
$\lambda_i(K,\Z^n)=1/\mu_i$ for $1\leq i\leq n$, and thus $K$ attains
equality.
\end{proof}

As mentioned in the introduction, the question about an upper bound as in
\eqref{eq:minkowski} for $K\in\Kgn$ is strongly related to Ehrhart's
conjecture (see~\cite{Ehr2}, and also \cite{BermanBerndtsson, NillPaffenholz}).

\begin{conjecture}[Ehrhart, \cite{Ehr2}]\label{conj:ehrhart}
Let $K\in\Kgn$ with $\inte K\cap\Z^n=\{\vnull\}$. Then
\begin{equation*}
\vol(K)\leq \frac{(n+1)^n}{n!},
\end{equation*}
and equality holds if and only if $K$ is -- up to unimodular
transformations -- the simplex $T_n$.
\end{conjecture}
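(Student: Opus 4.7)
Since Ehrhart's conjecture remains open in full generality, I would structure the attempt around two stages: a rigidity argument reducing the problem to simplices, together with the direct simplex analysis already encapsulated in Proposition \ref{prop:ehr_simpl}. As a sanity check, first I would verify the extremal candidate: for $T_n = -\bone + (n+1)S_n$, a short computation gives $\cen(T_n) = \vnull$ and $\vol(T_n) = (n+1)^n/n!$, while the only interior lattice point of $(n+1)S_n$ is $\bone$, so $\inte T_n \cap \Z^n = \{\vnull\}$ after translation by $-\bone$. This identifies the conjectural extremizer and its unimodular orbit.

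For the upper bound on an arbitrary $K \in \Kgn$, my first line of attack would be to refine the Milman--Pajor route producing the weaker factor $4^n$ recorded in \eqref{eq:milmanpajorehrhart}. Since $\cen(K) = \vnull$, one has $\vol(K) \leq 2^n \vol\bigl(K \cap (-K)\bigr)$, and $K \cap (-K) \in \Kon$ is interior-lattice-free, so Minkowski's first theorem yields $\vol\bigl(K \cap (-K)\bigr) \leq 2^n$. To sharpen the $4^n$ constant to $(n+1)^n/n!$, I would exploit Gr\"unbaum's halfspace Theorem \ref{thm:gruenbaum} in combination with the lower bound of Theorem \ref{thm:vol_main}: Gr\"unbaum quantifies how little mass is cut off by a hyperplane through the centroid, while Theorem \ref{thm:vol_main} constrains $\prod \lambda_i(K,\Z^n)$ from below and records the associated simplex as the unique equality case. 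Pairing these two ingredients with the lattice-free-interior hypothesis, which forces $\lambda_1\bigl(K \cap (-K),\Z^n\bigr) \geq 1$, is where one might hope the sharp constant emerges.

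In parallel I would pursue a rigidity reduction: assume $K \in \Kgn$ attains (or nearly attains) equality, and show that $K$ must be a simplex, thereby reducing to Proposition \ref{prop:ehr_simpl}. The heuristic is that a non-simplex body admits a local deformation preserving both $\cen(K) = \vnull$ and $\inte K \cap \Z^n = \{\vnull\}$ but strictly increasing $\vol(K)$, obtained by pushing out a facet whose outer normal is not pinned by the finite set of lattice vectors on $\bd K$. The main obstacle, and the reason the conjecture has resisted proof for decades, lies precisely in making this rigidity step unconditional: the constraint $\inte K \cap \Z^n = \{\vnull\}$ is non-smooth, and near-extremal $K$ may have many facets simultaneously pinned by boundary lattice points, so the combinatorial accounting of admissible volume-increasing deformations against the number of pinned facets is delicate. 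Absent a new idea here — perhaps a covariogram-based sharpening of Milman--Pajor that explicitly uses the centroid condition rather than merely $\vnull \in \inte K$ — the plan at best recovers the known cases $n=2$ and simplices of arbitrary dimension treated in the paper.
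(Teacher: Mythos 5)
The statement you were asked to prove is labelled a \emph{conjecture} in the paper, and the paper offers no proof of it: Ehrhart's conjecture is open in general, and the paper only establishes successive-minima analogues for $n=2$ (Proposition~\ref{prop:ehr_dim2}) and for simplices (Proposition~\ref{prop:ehr_simpl}), which, since $\inte K\cap\Z^n=\{\vnull\}$ forces $\lambda_i(K,\Z^n)\geq 1$ for all $i$, do imply the conjectured volume bound in those two cases. Your proposal correctly recognizes all of this and does not claim a complete proof, so there is nothing to fault on the level of honesty. Your verification of the extremal candidate is also correct: $\cen(T_n)=\vnull$, $\vol(T_n)=(n+1)^n/n!$, and the only interior lattice point of $(n+1)S_n$ is $\bone$, so $T_n$ attains equality.

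That said, the substantive content of your plan stops exactly where the open problem begins. The ``rigidity reduction to simplices'' is not a step you carry out but a restatement of the difficulty, and your proposed pairing of Gr\"unbaum's Theorem~\ref{thm:gruenbaum} with Theorem~\ref{thm:vol_main} cannot by itself produce an upper bound on $\vol(K)$, since Theorem~\ref{thm:vol_main} is a \emph{lower} bound; the way Gr\"unbaum's theorem is actually made to work in the paper's Propositions~\ref{prop:ehr_dim2} and~\ref{prop:ehr_simpl} is by combining it with a symmetrization of $K$ (Ehrhart's chord construction in the plane, the parallelepiped symmetral $\Pi_H(S)$ for simplices) and then applying Minkowski's theorem \eqref{eq:minkowski} to the resulting $o$-symmetric body. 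Absent an analogous symmetrization valid for arbitrary $K\in\Kgn$, your route delivers only the $4^n$ bound of \eqref{eq:milmanpajorehrhart} in general, as you yourself conclude. In short: the proposal is an accurate survey of the state of the problem, not a proof, and no proof should be expected here.
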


Ehrhart \cite{Ehr, Ehr3} proved his conjecture, among others, for two
dimensional convex bodies. Here we follow his approaches in order to
extend his results to successive minima inequalities.

\begin{proposition}\label{prop:ehr_dim2}
Let $K\in\mathcal{K}_c^2$ and let $\Lambda$ be a lattice of rank $2$. Then
\begin{equation*}
 \frac{\vol(K)}{\det\Lambda}\leq \frac{9}{2}\,\frac{1}{\lambda_1(K,\Lambda)}\frac{1}{\lambda_2(K,\Lambda)},
\end{equation*}
and for $\Lambda=\Z^2$, equality holds for the triangle $T_2$.
\end{proposition}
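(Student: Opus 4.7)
The plan is to imitate Ehrhart's approach to the planar case of his conjecture \cite{Ehr,Ehr3}, now carefully tracking both successive minima rather than only the hypothesis $\inte K\cap\Z^2=\{\vnull\}$.

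First I would normalize. By applying a suitable linear transformation I may assume $\Lambda=\Z^2$. In dimension two the two successive minima $\lambda_i:=\lambda_i(K,\Z^2)$ can always be attained by a basis of $\Z^2$ (a fact specific to planar lattices, which I would justify using that every primitive lattice vector extends to a basis together with a short argument on the coset of the second achiever). After a further unimodular change of coordinates I therefore arrange $\ve_i/\lambda_i\in K$ for $i=1,2$.

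The key reduction is then to pass to the image $K':=AK$ under $A:=\diag(\lambda_1,\lambda_2)$. The body $K'$ still lies in $\mathcal{K}_c^2$, has volume $\lambda_1\lambda_2\,\vol(K)$, and satisfies $\ve_i\in\partial K'$ for $i=1,2$. If one can show that $\inte K'\cap\Z^2=\{\vnull\}$, then Ehrhart's two-dimensional theorem yields $\vol(K')\leq 9/2$, and rearranging gives the desired $\vol(K)/\det\Lambda\leq(9/2)/(\lambda_1\lambda_2)$. The equality case is then a direct verification: for $T_2$ with $\Lambda=\Z^2$ one has $\lambda_1=\lambda_2=1$ and $\vol(T_2)=9/2$.

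The heart of the argument is the verification of the lattice-freeness of $\inte K'$, equivalently the statement that $(m/\lambda_1,n/\lambda_2)\notin\inte K$ for every $(m,n)\in\Z^2\setminus\{\vnull\}$. For $n=0$ this is immediate from the minimality of $\lambda_1$: an interior point $m\ve_1\in\inte(\lambda_1 K)$ would survive a small shrinkage, producing a nonzero lattice point of $\lambda K$ for some $\lambda<\lambda_1$. For the off-axis case $n\neq 0$ the argument is more delicate; I would use the defining property of $\lambda_2$ — that for $\lambda'<\lambda_2$ every lattice point of $\lambda' K$ lies in $\lin\{\ve_1\}$ — applied to suitable lattice vectors parallel to $(m,n)$, combined with centeredness of $K$ and a planar geometric argument possibly invoking Gr\"unbaum's halfspace theorem (Theorem~\ref{thm:gruenbaum}). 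This off-axis step, in particular dealing with the case when $\lambda_2/\lambda_1$ is not rational via a continuity/approximation argument on the gauge function of $K$, is the main technical obstacle of the plan.
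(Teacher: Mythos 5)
Your reduction stands or falls with the claim that, after normalizing so that $\ve_i/\lambda_i\in K$, the body $K'=\diag(\lambda_1,\lambda_2)K$ has no nonzero lattice point in its interior, i.e.\ $(m/\lambda_1,n/\lambda_2)\notin\inte K$ for all $(m,n)\in\Z^2\setminus\{\vnull\}$. This is precisely the step you leave open, and it is not a technical verification: it is where the whole difficulty of the proposition is concentrated, and it is simply false if one only uses that $\vnull\in\inte K$. For example, take $\lambda_1=5/7$, $\lambda_2=1$ and let $K$ be the convex heptagon with vertices $(1.4,0)$, $(0.5,1)$, $(-0.5,1)$, $(-1.4,0)$, $(1.05,-1)$, $(1.55,-1.15)$, $(1.8,-1)$. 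One checks that $\ve_1/\lambda_1=(1.4,0)$ and $\ve_2/\lambda_2=(0,1)$ lie on the boundary, that no nonzero lattice point lies in $\inte\bigl(\tfrac57K\bigr)$ and no lattice point off the $x$-axis lies in $\inte K$ (the only interior lattice points are $(0,0),(\pm1,0)$), so $\lambda_1(K,\Z^2)=5/7$ and $\lambda_2(K,\Z^2)=1$ are attained at $\ve_1,\ve_2$; yet $(1/\lambda_1,-1/\lambda_2)=(1.4,-1)\in\inte K$, i.e.\ $(1,-1)$ is an interior lattice point of $K'$. Hence the minimality of $\lambda_1,\lambda_2$ alone never gives the lattice-freeness of $\inte K'$; any proof must exploit $\cen(K)=\vnull$ quantitatively. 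Using centredness (e.g.\ $-K\subseteq 2K$ in the plane) one can in fact rule out interior points $(m/\lambda_1,n/\lambda_2)$ with $n\geq1$ by elementary segment arguments with the anchors $\ve_1/\lambda_1$, $-\ve_1/(2\lambda_1)$, $\ve_2/\lambda_2$, but for $n\leq-1$ (the situation of the heptagon above) no anchor of $K$ is available on that side, and your sketch (minimality of $\lambda_2$, continuity of the gauge, possibly Gr\"unbaum) gives no argument there; whether the claim is even true for all $K\in\mathcal{K}_c^2$ is unclear to me and certainly not in the literature you could cite. Also the preliminary step, that the two minima of a non-symmetric planar body are attained on a basis of $\Z^2$, is true but itself requires the index-reduction argument you only allude to. So as written the proposal has a genuine gap at its central step.

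For contrast, the paper's proof avoids any diagonal-scaling reduction and any lattice-freeness statement: assuming $\lambda_1\lambda_2\vol(K)>9/2$, it chooses a line $H$ through the centroid such that $\vnull$ bisects the chord $K\cap H$, invokes Ehrhart's lemma that one of the sets $(K\cap H^{\pm})\cup\bigl(-(K\cap H^{\pm})\bigr)$ is convex, observes that this $o$-symmetric body $C^+$ satisfies $\lambda_i(C^+,\Z^2)\geq\lambda_i(K,\Z^2)$, and then Gr\"unbaum's halfspace theorem gives $\vol(C^+)=2\vol(K\cap H^+)>4/(\lambda_1\lambda_2)$, contradicting Minkowski's second theorem. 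If you wish to pursue your route, you would first have to prove the lattice-freeness of $\inte K'$ for centred planar bodies — a statement that seems comparable in depth to the proposition itself — before Ehrhart's planar volume theorem can be applied as a black box.
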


\begin{proof}
As always when dealing with the volume, we may assume $\Lambda=\Z^2$,  and
for short we write $\lambda_i=\lambda_i(K,\Z^2)$. We assume that
$\lambda_1\lambda_2\vol(K)>9/2$, and let $H$ be a line passing through the
centroid $\vnull$ of $K$, such that $\vnull$ is the midpoint of the
corresponding chord $K\cap H$. Then, by a result of Ehrhart \cite{Ehr} we
know that one of the sets $C^+=(K\cap H^+)\cup\bigl(-(K\cap H^+)\bigr)$ or
$C^-=(K\cap H^-)\cup\bigl(-(K\cap H^-)\bigr)$ is convex, and without loss
of generality we assume that $C^+$ is convex. By the $o$-symmetry of $C^+$
we have, in particular, $\lambda_i\leq\lambda_i(C^+,\Z^2)$, $i=1,2$. Now,
by Theorem~\ref{thm:gruenbaum} and our assumption we get
\[
\vol(K\cap H^+)\geq\frac{4}{9}\vol(K)>
\frac{4}{9}\frac{9}{2\lambda_1\lambda_2}=\frac{2}{\lambda_1\lambda_2},
\]
and therefore,
\[
\vol(C^+)=2\vol(K\cap H^+)
>\frac{4}{\lambda_1\lambda_2}\geq\frac{4}{\lambda_1(C^+,\Z^2)\lambda_2(C^+,\Z^2)},
\]
contradicting Minkowski's inequality \eqref{eq:minkowski}.
\end{proof}

\begin{proposition}\label{prop:ehr_simpl}
Let $S\in\Kgn$ be a simplex and let $\Lambda$ be a lattice of rank $n$. Then
\[
\frac{\vol(S)}{\det\Lambda}\leq\frac{(n+1)^n}{n!}\prod_{i=1}^n\frac1{\lambda_i(S,\Lambda)},
\]
and for $\Lambda=\Z^n$, equality holds for the simplex $T_n$.
\end{proposition}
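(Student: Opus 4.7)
The plan is to reduce the inequality to a lower bound on the determinant of a matrix naturally associated to the simplex and its successive-minima witnesses, paralleling Ehrhart's approach for his conjecture on simplices.

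First, by a unimodular transformation I may assume $\Lambda=\Z^n$. Writing $S=\conv\{v_0,\dots,v_n\}$ with $\sum_iv_i=\vnull$ and using the identity $\vol(S)=\frac{n+1}{n!}|\det(v_1,\dots,v_n)|$ (which follows from $v_0=-\sum_{j\geq 1}v_j$ via the matrix determinant lemma applied to $[v_1-v_0,\dots,v_n-v_0]=[v_1,\dots,v_n](I+\bone\bone^\top)$), the target inequality becomes
\[
|\det(v_1,\dots,v_n)|\prod_{i=1}^n\lambda_i\leq(n+1)^{n-1},\qquad\lambda_i=\lambda_i(S,\Z^n).
\]
Let $\vz_1,\dots,\vz_n\in\Z^n$ be linearly independent witnesses with $\vz_i/\lambda_i\in S$, and let $A\in[0,1]^{n\times(n+1)}$ be the row-stochastic matrix of barycentric coordinates of $\vz_i/\lambda_i$ with respect to $(v_0,\dots,v_n)$. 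Eliminating $v_0$ yields $\vz_i/\lambda_i=\sum_{j\geq 1}(A_{ij}-A_{i0})v_j$, so
\[
|\det(\vz_1,\dots,\vz_n)|=|\det(v_1,\dots,v_n)|\cdot|\det B|\cdot\prod_i\lambda_i,
\]
where $B$ is the $n\times n$ matrix with $B_{ij}=A_{ij}-A_{i0}$. Since $|\det(\vz_1,\dots,\vz_n)|\geq\det\Z^n=1$, the target inequality is equivalent to the determinant lower bound $|\det B|\geq|\det(\vz_1,\dots,\vz_n)|/(n+1)^{n-1}$.

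The main step is to establish this determinant lower bound. A useful identity is
\[
\det B=\det\begin{pmatrix}\bone^\top\\A\end{pmatrix}
\]
(the $(n+1)\times(n+1)$ matrix obtained by prepending a row of ones to $A$), which reveals its dependence only on the barycentric-coordinate data. Combined with the nonnegative, row-stochastic structure of $A$ and the lattice-integrality of the $\vz_i$, the bound should follow from a combinatorial/geometric analysis of the possible support patterns of $A$, together with the sublattice index $[\Z^n:\sum_i\Z\vz_i]$ when the witnesses fail to form a basis. Equality is attained for $S=T_n$ with $\vz_i$ forming a $\Z$-basis: one computes $A_{i0}=0$, $A_{ii}=2/(n+1)$, $A_{ij}=1/(n+1)$ for $j\neq i$, $1\leq j\leq n$, whence $|\det B|=(n+1)^{1-n}=|\det(\vz_1,\dots,\vz_n)|/(n+1)^{n-1}$. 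The main obstacle is proving the bound uniformly over all barycentric patterns and witness configurations, in particular when the witnesses do not form a lattice basis of $\Z^n$.
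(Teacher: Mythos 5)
Your reduction is correct algebra as far as it goes: with $\cen(S)=\vnull$ one indeed has $\vol(S)=\frac{n+1}{n!}|\det(v_1,\dots,v_n)|$, and the change of variables through the barycentric matrix $A$ turns the claim into the bound $|\det B|\geq|\det(\vz_1,\dots,\vz_n)|/(n+1)^{n-1}$. But this bound is exactly the heart of the proposition, and you have not proved it; worse, it cannot follow from the ingredients you list (nonnegativity and row-stochasticity of $A$, integrality and linear independence of the $\vz_i$, the index of the sublattice they span), because those ingredients do not encode the \emph{minimality} of the $\lambda_i$. Concretely, take $n=2$, $S=R\,T_2$ for large $R$, and the lattice points $\vz_i=\ve_i$ with scalars $\mu_i=1$ (so $\vz_i/\mu_i\in S$, the $\vz_i$ are independent, the index is $1$, and $A$ is nonnegative and row-stochastic): the same computation gives $\det B=1/(3R^2)$, while $|\det(\vz_1,\vz_2)|/(n+1)^{n-1}=1/3$, so the determinant bound fails badly. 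With the true minima $\lambda_i=1/R$ it holds with equality, which shows that any proof must use that $\inte(\lambda_i S)$ meets $\Z^n$ only in a subspace of dimension less than $i$ — information invisible to the barycentric data $A$. Your closing sentence acknowledges this ("the main obstacle is proving the bound uniformly\dots"), so the proposal is a reformulation of the problem rather than a proof.

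For comparison, the paper avoids any such determinant analysis: it uses Ehrhart's symmetrization of $S$ into an $o$-symmetric parallelepiped $\Pi_H(S)$, proves the inclusion $\Pi_H(S)\cap H^-\subseteq\frac{n}{2}S$ (so that $\lambda_i(S,\Z^n)\leq\frac{n}{2}\lambda_i(\Pi_H(S),\Z^n)$ after reflecting witnesses into $H^-$), compares volumes via $\vol(\Pi_H(S))=n!\,\vol(S\cap H^-)$ together with Gr\"unbaum's halfspace theorem, and then derives a contradiction with the upper bound in Minkowski's 2nd theorem applied to $\Pi_H(S)$. This is where the minimality of the successive minima enters — through Minkowski's theorem for the symmetric body — and it is precisely the mechanism your approach still lacks. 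If you want to salvage your route, you would need to inject such an argument into the analysis of $A$, at which point you are essentially redoing the symmetrization proof.
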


In ~\cite{Ehr3}, Ehrhart used a nice symmetrization that transforms the
simplex into a parallelepiped. Let $S=\conv\{\vv_0,\vv_1,\ldots,\vv_n\}$
be a simplex with centroid at the origin, that is,
$\vv_0=-\sum_{i=1}^n\vv_i$. We consider the $(n\!-\!1)$-dimensional
subspace $H\subset\R^n$ which is parallel to the facet
$\conv\{\vv_1,\ldots,\vv_n\}$ of $S$. Then $S\cap
H=\conv\{\vw_1,\ldots,\vw_n\}$, where
\[
\vw_i=\vv_0+\frac{n}{n+1}(\vv_i-\vv_0)=\frac1{n+1}\vv_0+\frac{n}{n+1}\vv_i,
\]
$1\leq i\leq n$. Now, we define the parallelepiped
\[
P_H(S)=\conv\left\{\vv_0+\sum_{i=1}^n\e_i(\vw_i-\vv_0):(\e_1,\dots,\e_n)\in\{0,1\}^n\right\}.
\]
The vertex of $P_H(S)$ opposite to $\vv_0$ is
$\vv_0+\sum_{i=1}^n(\vw_i-\vv_0)=-(n-1)\vv_0$. Next, we translate $P_H(S)$
by its center, $\bigl(\vv_0-(n-1)\vv_0\bigr)/2=-(n-2)/2\vv_0$, and we
define the ``symmetral'' of $S$ by $\Pi_H(S)=P_H(S)+(n-2)/2\vv_0$.

\begin{lemma}\label{lem:incl}
Let $S=\conv\{\vv_0,\vv_1,\ldots,\vv_n\}$ have its centroid at the origin
and let $H$ be the $(n\!-\!1)$-dimensional subspace parallel to
$\conv\{\vv_1,\ldots,\vv_n\}$. Let $H^-$ be the halfspace containing the
vertex $\vv_0$. Then
\[
\Pi_H(S)\cap H^-\subseteq\frac{n}2S.
\]
\end{lemma}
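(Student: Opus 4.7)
The plan is to compare $\Pi_H(S)$ with $(n/2)S$ via an explicit parametrization of the points of $\Pi_H(S)$ in coordinates adapted to the vertices of $S$, and then to translate both the hyperplane condition $\vx\in H^-$ and the inclusion $\vx\in(n/2)S$ into linear inequalities in those coordinates.

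First I would unwind the definition of $\Pi_H(S)$: since $\vw_i-\vv_0=\frac{n}{n+1}(\vv_i-\vv_0)$, the vertex $\vv_0+\sum_i\varepsilon_i(\vw_i-\vv_0)$ of $P_H(S)$ equals $\vv_0+\frac{n}{n+1}\sum_i\varepsilon_i(\vv_i-\vv_0)$, and after the translation by $\frac{n-2}{2}\vv_0$ the vertices of $\Pi_H(S)$ become
\[
\frac{n}{2}\vv_0+\frac{n}{n+1}\sum_{i=1}^n\varepsilon_i(\vv_i-\vv_0),\qquad \varepsilon\in\{0,1\}^n.
\]
Using $\sum_{i=1}^n\vv_i=-\vv_0$ one checks that the $\varepsilon=\mathbf{0}$ and $\varepsilon=\mathbf{1}$ vertices are $\pm\frac{n}{2}\vv_0$, so $\Pi_H(S)$ is origin-symmetric as expected. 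Since $\Pi_H(S)$ is a parallelepiped, every $\vx\in\Pi_H(S)$ can be written (by setting $\beta_i=\sum_{\varepsilon:\varepsilon_i=1}\alpha_\varepsilon$ for convex weights $\alpha_\varepsilon$ of the vertices) as
\[
\vx=\frac{n}{2}\vv_0+\frac{n}{n+1}\sum_{i=1}^n\beta_i(\vv_i-\vv_0),\qquad \beta_i\in[0,1].
\]

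Second I would rephrase both $\vx\in(n/2)S$ and $\vx\in H^-$ as inequalities in the $\beta_i$. For the former, I compute the barycentric coordinates $\nu_0,\ldots,\nu_n$ of $\vx$ with respect to $(n/2)S$ directly from the parametrization; since $\vv_1,\dots,\vv_n$ form a basis of $\R^n$ and $\vv_0=-\sum_{i\geq 1}\vv_i$, matching coefficients in that basis yields $\nu_i(\vx)=\frac{2}{n+1}\beta_i$ for $i\geq 1$ and $\nu_0(\vx)=1-\frac{2}{n+1}\sum_i\beta_i$, so that $\vx\in(n/2)S$ is equivalent to the single inequality $\sum_i\beta_i\leq(n+1)/2$. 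For the latter, $H$ is the translate of $\aff\{\vv_1,\ldots,\vv_n\}$ through the origin; writing $\mu_0$ for the barycentric coordinate of $\vx$ with respect to $S$ corresponding to $\vv_0$, the facet hyperplane is $\{\mu_0=0\}$ and the centroid (the origin) has $\mu_0=1/(n+1)$, so $H^-=\{\mu_0\geq 1/(n+1)\}$. An analogous affine computation for $\mu_0(\vx)$ boils this down to $\sum_i\beta_i\leq n/2$.

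The conclusion is immediate: since $n/2\leq(n+1)/2$, any $\vx\in\Pi_H(S)\cap H^-$ satisfies $\sum\beta_i\leq n/2\leq(n+1)/2$ and therefore lies in $(n/2)S$. The only obstacle is careful barycentric bookkeeping; there is no genuinely hard step. The quantitative content of the lemma is that the factor $n/(n+1)$ built into the definition of $P_H(S)$ is exactly calibrated so that the hyperplane condition $\sum\beta_i\leq n/2$ is strictly stronger than the simplex condition $\sum\beta_i\leq(n+1)/2$.
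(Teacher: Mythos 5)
Your proof is correct, and it is essentially the paper's argument carried out in invariant (barycentric) language: you verify the inclusion directly by parametrizing the parallelepiped $\Pi_H(S)$ and translating both $\vx\in H^-$ and $\vx\in\frac{n}{2}S$ into the inequalities $\sum_i\beta_i\leq n/2$ and $\sum_i\beta_i\leq (n+1)/2$, which is exactly the computation the paper performs after first using affine invariance to normalize $S$ to $T_n$, where $\Pi_H(T_n)=\frac{n}{2}[-1,1]^n$ and the two conditions become the facet inequalities of $\frac{n}{2}T_n$. The only difference is that the paper's normalization makes the bookkeeping a one-liner, while you do the (correct) coordinate matching by hand.
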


\begin{proof}
Since the claim is invariant under affine transformations, we consider the
simplex $T_n$, which can be expressed as
\begin{equation*}
T_n=\left\{(x_1,\dots,x_n)\in\R^n:x_i\geq-1,1\leq i\leq
n,\,\sum_{i=1}^nx_i\leq 1\right\}.
\end{equation*}
Here $H=\bigl\{\vx\in\R^n:\ip{\bone}{\vx}=0\bigr\}$ and $T_n\cap
H=-\bone+\conv\{n\ve_1,\ldots,n\ve_n\}$, and thus the symmetral is
$\Pi_H(T_n)=(n/2)[-1,1]^n$. Since
$H^-=\bigl\{\vx\in\R^n:\ip{\bone}{\vx}\leq 0\bigr\}$, we get from the
facet description of $T_n$ that indeed it holds $\Pi_H(T_n)\cap
H^-\subseteq(n/2)T_n$.
\end{proof}

\begin{proof}[Proof of Proposition \ref{prop:ehr_simpl}]
Again, it suffices to consider the standard lattice $\Lambda=\Z^n$. For
$i\in\{1,\ldots,n\}$, we write
$\lambda_i=\lambda_i\bigl(\Pi_H(S),\Z^n\bigr)$, and let $\vz_i\in\Z^n$ be
such that $\vz_i\in\lambda_i\Pi_H(S)$. Since $\Pi_H(S)$ is $o$-symmetric,
we can assume (after a suitable reflection of $\vz_i$) that
$\vz_i\in\lambda_i\,\bigl(\Pi_H(S)\cap H^-\bigr)$ -- we follow the notation
in Lemma~\ref{lem:incl}. Then Lemma~\ref{lem:incl} implies that
$\vz_i\in\lambda_i(n/2)S$ and hence $\lambda_i(S,\Z^n)\leq(n/2)\lambda_i$,
$1\leq i\leq n$.

Now, we assume that
$\lambda_1(S,\Z^n)\cdot\ldots\cdot\lambda_n(S,\Z^n)\vol(S)>(n+1)^n/n!$. By
definition of $\Pi_H(S)$, we have $\vol\bigl(\Pi_H(S)\bigr)=n!\vol(S\cap
H^-)$ and thus using Theorem~\ref{thm:gruenbaum} we get
\begin{align*}
\vol\bigl(\Pi_H(S)\bigr) & =n!\vol(S\cap H^-)\geq\frac{n!\,n^n}{(n+1)^n}\vol(S)\\
    &>\frac{n^n}{\prod_{i=1}^n\lambda_i(S,\Z^n)}
  \geq\frac{2^n}{\prod_{i=1}^n\lambda_i}.
\end{align*}
It contradicts Minkowski's 2nd theorem, \eqref{eq:minkowski}, and proves
the claim.
\end{proof}

\section{Bounds for the surface area}\label{s:F(K)}

In general, we cannot expect to find upper bounds on $\sur(K)$, or
on the quotient $\sur(K)/\vol(K)$,  in
terms of $\lambda_i(K,\Z^n)^{-1}$ as the
following example shows.

\begin{example}
For $\ell\in\N$, we consider the cross-polytope
\[
K_\ell=\conv\bigl\{\pm\ve_1,\ldots,\pm\ve_{n-1},\pm(\ell\ve_1+\ve_n)\bigr\}.
\]
Then $\lambda_i(K_\ell,\Z^n)=1$, for $1\leq i\leq n$ and all $\ell\in\N$,
but both $\sur(K_\ell)\to\infty$ and $\sur(K_\ell)/\vol(K_\ell)\to\infty$
as $\ell\to\infty$. \label{exam:no_ub_sur}
\end{example}

The proof of the lower bound, i.e., Theorem~\ref{thm:surface}, is based on
the following lemma which might be of independent interest.

\begin{lemma}
Let $Z\in\Z^{n\times n}$, $\det Z\ne 0$, and let $\valpha\in \R^n$ be with
$\Vert\valpha\Vert=1$.
For $\veps\in\R^n$, we write $\valpha_{\veps}=(\e_1\alpha_1,\ldots,\e_n\alpha_n)$.
Then
\begin{equation*}
     \sum_{_{\veps\in\{(\pm1,\ldots,\pm1)^\intercal\}}
     }\Vert Z \valpha_{\veps} \Vert \geq 2^n,
\end{equation*}
and, for $\valpha>\vnull$ equality holds if and only if -- up to column
permutations and signs -- $Z$ is the identity matrix.
\label{lem:matrix}
\end{lemma}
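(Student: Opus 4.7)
My plan is to find, for every sign pattern $\veps\in\{\pm 1\}^n$, a unit vector $\vu_\veps\in\R^n$ such that the trivial inequality $\Vert Z\valpha_\veps\Vert\ge \ip{\vu_\veps}{Z\valpha_\veps}$ already yields, upon summing over $\veps$, the right-hand side $2^n$. The crucial integrality input is this: since $\det Z=\sum_\sigma \mathrm{sgn}(\sigma)\prod_j Z_{j,\sigma(j)}\ne 0$, some permutation $\pi$ of $\{1,\dots,n\}$ satisfies $Z_{j,\pi(j)}\ne 0$ for every $j$, and then integrality gives $|Z_{j,\pi(j)}|\ge 1$.

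With such a $\pi$ fixed, I would set $s_j:=\mathrm{sign}(Z_{j,\pi(j)})$, $t_j:=\mathrm{sign}(\alpha_{\pi(j)})$, and $w_j:=|\alpha_{\pi(j)}|$, and define $(\vu_\veps)_j:=w_j s_j t_j \varepsilon_{\pi(j)}$. Since $\pi$ permutes the indices, $\sum_j w_j^2=\sum_i\alpha_i^2=1$, so $\Vert\vu_\veps\Vert=1$ for every $\veps$. Expanding the inner product and using the Rademacher orthogonality $\sum_\veps \varepsilon_{\pi(j)}\varepsilon_i = 2^n\delta_{i,\pi(j)}$, every off-diagonal cross term vanishes and only the $i=\pi(j)$ contributions survive, so
\[
\sum_\veps \ip{\vu_\veps}{Z\valpha_\veps}
=2^n\sum_{j=1}^n w_j s_j t_j Z_{j,\pi(j)}\alpha_{\pi(j)}
=2^n\sum_{j=1}^n |\alpha_{\pi(j)}|^2\,|Z_{j,\pi(j)}|\ge 2^n,
\]
where the last inequality uses $|Z_{j,\pi(j)}|\ge 1$. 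Combined with the Cauchy--Schwarz bound $\Vert Z\valpha_\veps\Vert\ge \ip{\vu_\veps}{Z\valpha_\veps}$, this gives the claim.

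For the equality analysis under $\valpha>\vnull$, Cauchy--Schwarz equality forces $Z\valpha_\veps$ to be a positive multiple of $\vu_\veps$ for every $\veps$. Since $(\vu_\veps)_j$ depends only on $\varepsilon_{\pi(j)}$, while $(Z\valpha_\veps)_j=\sum_i Z_{ji}\varepsilon_i\alpha_i$ depends on every $\varepsilon_i$ with $Z_{ji}\alpha_i\ne 0$, strict positivity of the $\alpha_i$ forces $Z_{ji}=0$ for $i\ne\pi(j)$. Together with $|Z_{j,\pi(j)}|=1$ (equality in the second inequality), this identifies $Z$ as a signed permutation matrix, i.e.\ the identity up to column permutations and signs. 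The main obstacle of the argument is choosing the $\veps$-dependence of $\vu_\veps$ correctly: a single $\veps$-independent dual vector costs a factor $\sqrt{n}$ through $\Vert\cdot\Vert_2\ge \Vert\cdot\Vert_1/\sqrt{n}$ and only yields the weaker bound $\sum_\veps \Vert Z\valpha_\veps\Vert\ge 2^n/\sqrt{n}$, so the correlation of $\vu_\veps$ with $\veps$ along the $\pi$-diagonal is what makes the Rademacher orthogonality select precisely the integer-constrained diagonal entries.
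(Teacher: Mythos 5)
Your proof of the inequality itself is correct, and it is a genuinely different and more elementary route than the paper's. The paper orders the columns so that $\alpha_1\ge\dots\ge\alpha_n$, applies an iterated Steiner-symmetrization (triangle-perimeter) argument to reduce the left-hand side to $2^n\Vert\alpha_1\vg_1+\dots+\alpha_n\vg_n\Vert$, where the $\vg_i$ are the Gram--Schmidt vectors of the columns, and then proves $\sum_i\alpha_i^2\Vert\vg_i\Vert^2\ge1$ by a weighted AM--GM inequality using that the Gram determinants $\det(Z_i^\intercal Z_i)$ are positive integers. You instead extract a nonvanishing generalized diagonal $Z_{j,\pi(j)}$ from the expansion of $\det Z$, test each $\Vert Z\valpha_\veps\Vert$ against a unit vector $\vu_\veps$ correlated with $\veps$ along that diagonal, and let the orthogonality $\sum_\veps\e_{\pi(j)}\e_i=2^n\delta_{i,\pi(j)}$ do the averaging; the computation $\sum_\veps\ip{\vu_\veps}{Z\valpha_\veps}=2^n\sum_j\alpha_{\pi(j)}^2\,|Z_{j,\pi(j)}|\ge 2^n$ is correct, and the degenerate coordinates $\alpha_i=0$ are harmless. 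One trade-off: your argument uses entrywise integrality, $|Z_{j,\pi(j)}|\ge1$, whereas the paper's proof only uses $\det(Z_i^\intercal Z_i)\ge1$ and therefore also yields Remark~\ref{rem:generalmatrix}, which is what is invoked for arbitrary lattices in Theorem~\ref{cor:surface_arblat}; a matrix can satisfy that Gram-determinant hypothesis with all entries of modulus less than one, so your method proves the lemma as stated but not that extension.

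The equality discussion, however, has a genuine gap. Equality in Cauchy--Schwarz gives $Z\valpha_\veps=c_\veps\vu_\veps$ with a factor $c_\veps=\Vert Z\valpha_\veps\Vert$ that depends on all of $\veps$, so $(Z\valpha_\veps)_j=c_\veps(\vu_\veps)_j$ is in general not a function of $\e_{\pi(j)}$ alone; your inference that the left side ``depends on every $\e_i$ with $Z_{ji}\alpha_i\ne0$'' and hence $Z_{ji}=0$ for $i\ne\pi(j)$ is therefore not justified as written. Concretely, after permuting columns and flipping their signs you may assume $\pi=\mathrm{id}$ and $Z_{jj}=1$; then $\vu_\veps=\valpha_\veps$, and the equality conditions say exactly that every $\valpha_\veps$ is an eigenvector of $Z$ with some positive eigenvalue $c_\veps$ --- which does not by itself annihilate the off-diagonal entries, since the $c_\veps$ may vary with $\veps$. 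The conclusion can still be reached, but it needs an extra argument: comparing $\veps$ with the flip $\veps^{(i)}$ of its $i$-th sign shows that if the $i$-th column of $Z$ is not $\ve_i$, then $c_\veps+c_{\veps^{(i)}}=2$ and $Z_{ji}=(c_\veps-c_{\veps^{(i)}})\e_i\e_j\alpha_j/(2\alpha_i)$ for all $j\ne i$; since the two eigenvalues are positive and sum to $2$, this gives $1\le|Z_{ji}|<\alpha_j/\alpha_i$, hence $\alpha_j>\alpha_i$ for every $j\ne i$, so at most one column can be non-trivial, and the $i$-th coordinate of the eigenvector equation then forces $c_\veps=1$ for all $\veps$, a contradiction. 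Some such step (or the paper's route, where the equality analysis of the symmetrization yields pairwise orthogonal columns) must be added before you may conclude that $Z$ is a signed permutation matrix.
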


\begin{proof}
After a suitable permutation of the columns $\vz_1,\dots,\vz_n$ of $Z$ we
may assume $\alpha_1\geq\alpha_2\geq\cdots\geq\alpha_n$. Let
$\vg_1,\dots,\vg_n$ be the Gram-Schmidt orthogonal basis associated to
$\vz_1,\dots,\vz_n$, i.e.,
\begin{equation*}
  \vg_i=\vz_i|\lin\{\vz_1,\dots,\vz_{i-1}\}^\perp,\quad 1\leq i\leq n.
\end{equation*}
So $\vg_i$ is the orthogonal projection of $\vz_i$ onto the orthogonal
complement of the $(i\!-\!1)$-dimensional space generated by
$\vz_1,\dots,\vz_{i-1}$. We observe that $\vg_1=\vz_1$. First we claim
that
\begin{equation}
\label{eq:firstclaim}
 \sum_{_{\veps\in\{(\pm1,\ldots,\pm1)^\intercal\}}
     }\Vert Z \valpha_{\veps} \Vert \geq 2^n
     \Vert\alpha_1\vg_1+\alpha_2\vg_2+\cdots +\alpha_n\vg_n\Vert.
\end{equation}
Taking the symmetry of the vectors $\valpha_{\veps}$ into account we have to
show that
\begin{equation*}
\label{eq:secondclaim}
 \sum_{_{\veps\in\{(1,\pm1,\ldots,\pm1)^\intercal\}}
     }\Vert Z \valpha_{\veps} \Vert \geq 2^{n-1}
     \Vert\alpha_1\vg_1+\alpha_2\vg_2+\cdots +\alpha_n\vg_n\Vert.
\end{equation*}
Let $\tilde{\vg}_i=\vz_i|\lin\{\vz_1\}^\perp$, for $2\leq i\leq n$; in
particular, $\tilde{\vg}_2=\vg_2$.  For each
$\veps\in\bigl\{(1,\pm1,\ldots,\pm1)^\intercal\bigr\}$, let
\begin{equation*}
\veps'=(1,-\e_2,\ldots,-\e_n)^\intercal,
\end{equation*}
i.e., the last $n-1$ coordinates change their signs, and let
\[
\vh=\e_2\alpha_2\vz_2+\cdots+\e_n\alpha_n\vz_n.
\]
In view of the properties of Steiner-symmetrization (see e.g.~\cite[Prop.~9.1]{Gr}),
we see that the perimeter of the triangle
$\conv\left\{\pm\alpha_1\vz_1,\vh|\lin\{\vz_1\}^\perp\right\}$ is less
than or equal to the perimeter of $\conv\{\pm\alpha_1\vz_1,\vh\}$, i.e.,
\begin{equation*}
\begin{split}
\Vert Z\valpha_{\veps}\Vert + \Vert Z\valpha_{\veps'}\Vert &
    =\Vert\alpha_1\vz_1 + \vh \Vert + \Vert  \alpha_1\vz_1 - \vh \Vert\\
 & \geq \left\Vert \alpha_1\vz_1 + \vh|\lin\{\vz_1\}^\perp \right\Vert +
    \left\Vert \alpha_1\vz_1 - \vh|\lin\{\vz_1\}^\perp \right\Vert\\
 & =2 \left\Vert\alpha_1\vz_1 + \vh|\lin\{\vz_1\}^\perp \right\Vert\\
 & = 2 \left\Vert  \alpha_1\vg_1+\e_2\alpha_2\vg_2+\e_3\alpha_3\tilde\vg_3+\cdots+
    \e_n\alpha_n\tilde\vg_n\right\Vert.
\end{split}
\end{equation*}
Hence we know that
\begin{align}
\sum_{_{\veps\in\{(1,\pm1,\ldots,\pm1)^\intercal\}}}
 & \Vert Z \valpha_{\veps} \Vert \nonumber\\
 & \hspace*{-5mm}\geq 2\sum_{_{\veps\in\{(1,1,\pm1,\ldots,\pm1)^\intercal\}}}\Vert\alpha_1\vg_1+
  \alpha_2\vg_2+\e_3\alpha_3\tilde\vg_3+\cdots+\e_n\alpha_n\tilde\vg_n \Vert.\label{eq:firstround}
\end{align}
Now we do the same with respect to $\vg_2=\vz_2|\lin\{\vz_1\}^\perp$,
i.e., we consider the points
$\widehat{\vg}_i=\tilde\vg_i|\lin\{\vg_2\}^\perp$, $1\leq i\leq n$, $i\ne
2$. By the orthogonality of $\vg_1,\vg_2$ we have
$\widehat{\vg}_1=\vg_1=\vz_1$, and by the definition of Gram-Schmidt
orthogonal basis we also have $\widehat{\vg}_3=\vg_3$. Arguing as before
but with respect to the triangle
$\conv\{\pm\alpha_2\vg_2,\alpha_1\vg_1+\vh\}$, with
$\vh=\e_3\alpha_3\tilde\vg_3+\cdots+\e_n\alpha_n\tilde\vg_n$, we get
\begin{equation*}
\begin{split}
\Vert\alpha_2\vg_2+\alpha_1\vg_1+\vh\Vert &  + \bigl\Vert\alpha_2\vg_2-(\alpha_1\vg_1+\vh)\bigr\Vert\\
  & \geq 2\left\Vert \alpha_2\vg_2 +(\alpha_1\vg_1+\vh)|\lin\{\vg_2\}^\perp\right\Vert\\
  &  = 2 \left\Vert \alpha_2\vg_2 +  \alpha_1\vg_1 +\e_3\alpha_3\widehat{\vg}_3+\cdots+
    \e_n\alpha_n\widehat{\vg}_n\right\Vert \\
  & = 2 \left\Vert \alpha_1\vg_1 +  \alpha_2\vg_2+\e_3\alpha_3\vg_3+\e_4\alpha_4\widehat{\vg}_4+\cdots+
  \e_n\alpha_n\widehat{\vg}_n\right\Vert,
\end{split}
\end{equation*}
and in view of  the orthogonality of  $\vg_1$  to $\vg_2,\vh$ we conclude
\begin{equation*}
\begin{split}
\Vert\alpha_1\vg_1+\alpha_2\vg_2+\vh \Vert & + \Vert\alpha_1\vg_1 + \alpha_2\vg_2-\vh\Vert \\
 & = \Vert\alpha_2\vg_2 +  \alpha_1\vg_1+\vh\Vert+\bigl\Vert\alpha_2\vg_2 - (\alpha_1\vg_1+\vh)\bigr\Vert \\
 & \geq 2 \left\Vert \alpha_1\vg_1 +  \alpha_2\vg_2+\e_3\alpha_3\vg_3+\e_4\alpha_4\widehat{\vg}_4+\cdots+
  \e_n\alpha_n\widehat{\vg}_n\right\Vert.
\end{split}
\end{equation*}
Hence, together with \eqref{eq:firstround}, we get
\begin{equation*}
\begin{split}
\sum_{_{\veps\in\{(1,\pm1,\ldots,\pm1)^\intercal\}}} & \Vert Z \valpha_{\veps} \Vert \\
 & \hspace*{-1.2cm}\geq 4\sum_{_{\veps\in\{(1,1,1,\pm1,\ldots,\pm1)^\intercal\}}}
    \Vert  \alpha_1\vg_1+\alpha_2\vg_2+\alpha_3\vg_3+\e_4\alpha_4\widehat{\vg}_4+\cdots+
   \e_n\alpha_n\widehat{\vg}_n \Vert.
\end{split}
\end{equation*}
Repeating this procedure we get \eqref{eq:firstclaim} and so it suffices
to show that
\begin{equation*}
   \sum_{i=1}^n\alpha_i^2\Vert\vg_i\Vert^2\geq 1.
\end{equation*}
By the definition of Gram-Schmidt orthogonal basis we have
\begin{equation*}
  Z= (\vg_1,\dots,\vg_n)\,T,
\end{equation*}
where $T$ is an upper triangular matrix whose diagonal elements are
all equal to $1$. Hence for $1\leq i\leq n$ we have
\begin{equation*}
  \det (Z_i^\intercal Z_i) = \Vert\vg_1\Vert^2\cdot\ldots\cdot \Vert\vg_i\Vert^2,
\end{equation*}
where $Z_i$ is the $(n\times i)$-submatrix of $Z$ consisting of the first
$i$ columns; in particular, we have $Z_n=Z$. Let $m_i= \det (Z_i^\intercal
Z_i)$, $1\leq i\leq n$. Then $m_i\in\N$, $m_i\geq 1$, and since
$\|\valpha\|=1$, we may write, using the weighted arithmetic-geometric
mean inequality, that
\begin{equation*}
\begin{split}
\sum_{i=1}^n\alpha_i^2\Vert\vg_i\Vert^2  &
  =\alpha_1^2\,m_1+\alpha_2^2\,\frac{m_2}{m_1}+\alpha_3^2\,\frac{m_3}{m_2}+\cdots + \alpha_n^2\,\frac{m_n}{m_{n-1}}\\
 & \geq m_1^{\alpha_1^2}\left(\frac{m_2}{m_1}\right)^{\alpha_2^2}\cdot\ldots\cdot
  \left(\frac{m_n}{m_{n-1}}\right)^{\alpha_n^2} \\
 & =m_1^{\alpha_1^2-\alpha_2^2}\,m_2^{\alpha_2^2-\alpha_3^2}\cdot\ldots\cdot m_{n-1}^{\alpha_{n-1}^2-\alpha_n^2}\,m_n^{\alpha_n^2}.
\end{split}
\end{equation*}
By assumption we have $\alpha_i\geq \alpha_{i+1}$ and since $m_i\geq
1$ we are done.

If equality holds then we have $m_i=1$, $1\leq i\leq n$, and so
$\Vert\vg_i\Vert = 1$, $1\leq i\leq n$. By the equality discussions of the
Steiner-symmetrization we also know that the vectors $\vz_i$ have to be
pairwise orthogonal and thus $\vg_i=\vz_i$, $1\leq i\leq n$.
\end{proof}

\begin{remark}
We observe that Lemma~\ref{lem:matrix} does not restrict to integer
matrices. It holds for any matrix~$V$ with $\det(V_i^\intercal V_i)\geq
1$, $1\leq i\leq n$, where $V_i$ is the $(n\times i)$-submatrix of $V$
consisting of the first $i$ columns. Equality is attained if only if $V$
is an orthogonal matrix. \label{rem:generalmatrix}
\end{remark}

The connection between the matrix problem
and the surface area  is based on the next calculation.
\begin{fact}\label{obs:facetvolume}
Let $P=\bigl\{\vx\in\R^n:\ip{\va_j}{\vx}\leq b_j, 1\leq j\leq m\bigr\}$ be
a non-redun\-dant representation of a polytope with $\Vert \va_j\Vert =1$,
and let $\phi_j$ be the $(n-1)$-dimen\-sional volume of the facet with
normal vector $\va_j$. Then, for $B\in\GL(n,\R)$, the polytope $BP$ has
outer normal vectors $B^{-\intercal}\va_j$, $1\leq j\leq m$, and the
$(n-1)$-dimensional volume of the facet with normal $B^{-\intercal}\va_j$
is given by
\begin{equation*}
    |\det B|\left\Vert B^{-\intercal}\va_j\right\Vert \phi_j.
\end{equation*}
\end{fact}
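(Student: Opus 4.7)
The plan is to verify the description of the outer normals first, and then to compute the facet volume by a prism argument that avoids parametrizing the facets. For the normal direction, I would substitute $\vy=B\vx$ into the defining inequality $\ip{\va_j}{\vx}\le b_j$, turning it into $\ip{\va_j}{B^{-1}\vy}=\ip{B^{-\intercal}\va_j}{\vy}\le b_j$. Hence
\[
BP=\bigl\{\vy\in\R^n:\ip{B^{-\intercal}\va_j}{\vy}\le b_j,\ 1\le j\le m\bigr\},
\]
and because $B$ is a bijection, this representation remains non-redundant. The vector $B^{-\intercal}\va_j$ is non-zero since $B\in\GL(n,\R)$, and it points outward at the corresponding facet $BF_j$, where $F_j$ denotes the facet of $P$ with normal $\va_j$.

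For the facet volume, my main device is to evaluate in two different ways the $n$-dimensional volume of the normal prism
\[
S_j=\bigl\{\vx+t\,\va_j:\vx\in F_j,\ 0\le t\le 1\bigr\}
\]
over $F_j$. Since $\va_j$ is a unit vector perpendicular to $\aff F_j$, the prism $S_j$ has $n$-volume exactly $\phi_j$. Applying $B$, we find $BS_j=\{B\vx+t\,B\va_j:\vx\in F_j,\,0\le t\le 1\}$, so $\vol(BS_j)=|\det B|\,\phi_j$ by the transformation rule for Lebesgue measure.

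On the other hand, $BS_j$ is a prism with base $BF_j$ sitting inside a hyperplane whose unit outer normal is $B^{-\intercal}\va_j/\Vert B^{-\intercal}\va_j\Vert$, and with generator $B\va_j$. Its $n$-volume therefore equals $\vol_{n-1}(BF_j)$ times the length of the projection of $B\va_j$ onto that unit normal, which computes to
\[
\left\langle B\va_j,\,\frac{B^{-\intercal}\va_j}{\Vert B^{-\intercal}\va_j\Vert}\right\rangle
=\frac{\ip{\va_j}{\va_j}}{\Vert B^{-\intercal}\va_j\Vert}=\frac{1}{\Vert B^{-\intercal}\va_j\Vert}.
\]
Equating the two expressions for $\vol(BS_j)$ then yields the claimed identity
\[
\vol_{n-1}(BF_j)=|\det B|\,\Vert B^{-\intercal}\va_j\Vert\,\phi_j.
\]
There is essentially no obstacle beyond bookkeeping; the only points to guard against are degeneracies, which are excluded because $B$ is invertible and each $F_j$ has positive $(n-1)$-measure thanks to the non-redundancy of the representation.
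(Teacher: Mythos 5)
Your argument is correct. Note that the paper states this as a \emph{Fact} and gives no proof at all (it is treated as a routine calculation preceding the proof of Theorem~\ref{thm:surface}), so there is no official argument to compare against; your write-up supplies the missing verification. The two-way evaluation of the volume of the normal prism $S_j=\{\vx+t\,\va_j:\vx\in F_j,\ 0\le t\le 1\}$ works: $\vol(S_j)=\phi_j$ since $\va_j$ is a unit normal to $\aff F_j$; $\vol(BS_j)=|\det B|\,\phi_j$ by the transformation rule; and the identity $\ip{B\va_j}{B^{-\intercal}\va_j}=\ip{\va_j}{\va_j}=1$ both shows the image prism is nondegenerate (the generator $B\va_j$ is transversal to $\aff(BF_j)$) and gives the height $1/\Vert B^{-\intercal}\va_j\Vert$, so the oblique-prism formula (Fubini in the direction of the unit normal of $\aff(BF_j)$, all cross-sections being translates of $BF_j$) yields $\vol(BS_j)=\vol_{n-1}(BF_j)/\Vert B^{-\intercal}\va_j\Vert$, and equating the two expressions gives the claim. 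The only step worth making explicit is that $B$, being a linear isomorphism, maps the facet $F_j=P\cap\{\vx:\ip{\va_j}{\vx}=b_j\}$ onto the facet of $BP$ contained in $\{\vy:\ip{B^{-\intercal}\va_j}{\vy}=b_j\}$, so the set whose $(n-1)$-volume you compute is indeed the facet named in the statement; this is immediate from your rewritten inequality description of $BP$ and the fact that linear isomorphisms preserve the face structure, and your remark on non-redundancy covers it.
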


Now we are ready to prove the main theorem of this section.

\begin{proof}[Proof of Theorem \ref{thm:surface}]
Let $\vz_i\in\lambda_i\,K\cap\Z^n$, $1\leq i\leq n$, be $n$ linearly independent
lattice points. Let $Z$ be the matrix with columns $\vz_1,\dots,\vz_n$.
Then
\begin{equation*}
Z\,\diag\left(\lambda_1^{-1},\dots,\lambda_n^{-1}\right)C_n^\star
\subseteq K.
\end{equation*}
The volume of each facet of the cross-polytope
$\diag\left(\lambda_1^{-1},\dots,\lambda_n^{-1}\right)C_n^\star$ is
\begin{equation}
\frac{1}{(n-1)!}\sqrt{\sigma_{n-1}(\lambda_1^{-2},\dots,\lambda_n^{-2})},\label{eq:vol_fac_cp}
\end{equation}
and writing
\begin{equation*}
  \valpha=\frac{1}{\sqrt{\lambda_1^2+\cdots+ \lambda_n^2}}\,(\lambda_1,\dots,\lambda_n)^\intercal,
\end{equation*}
the $2^n$ outer unit normal vectors of the above cross-polytope are given
by~$\valpha_{\veps}$, for $\veps\in\{(\pm1,\ldots,\pm1)^\intercal\}$. Hence, in
view of Fact~\ref{obs:facetvolume}, we get
\begin{align}
\sur(K) & \geq \sur\left(Z\,\diag(\lambda_1^{-1},\dots,\lambda_n^{-1})C_n^\star\right)\nonumber\\
  & = \left(\sum_{\veps\in\{(\pm1,\ldots,\pm1)^\intercal\}}
    \!|\det Z|\left\Vert Z^{-\intercal}\valpha_{\veps}\right\Vert\right) \frac{1}{(n-1)!}
    \sqrt{\sigma_{n-1}\!\left(\lambda_1^{-2},\dots,\lambda_2^{-2}\right)}.\label{eqn:surf_proof}
\end{align}
Since $|\det Z| Z^{-\intercal}$ is an integral matrix, the statement of
the theorem follows from Lemma \ref{lem:matrix}, as well as the
characterization of the equality case.
\end{proof}

We notice that Theorem~\ref{thm:surface} together with the upper bound
in~\eqref{eq:minkowski} yields
\[
\frac{\sur(K)}{\vol(K)}\geq\frac{1}{(n-1)!}\sqrt{\lambda_1^2+\dots+\lambda_n^2}.
\]
For $n=2$ this bound improves the one obtained in
\cite{henk1990inequ}, namely, that the ratio $\sur(K)/\vol(K)>\lambda_n$;
moreover, it is tight. But when $n\geq 3$ the above bound is
worse. We conjecture the right bound of this type to be
\begin{equation*}\label{eq:conj_F/vol}
\frac{\sur(K)}{\vol(K)}\geq\sqrt{\lambda_1^2+\dots+\lambda_n^2}.
\end{equation*}
In fact, it would be (asymptotically) sharp, as the example contained in
\cite{henk1990inequ} shows: For the parallelotope
$P_{\mu}=\bigl\{(x_1,\dots,x_n)\in\R^n:|x_i|\leq\mu/2, 1\leq i\leq n-1,\,
|x_n|\leq 1/2\bigr\}$ it is easy to check that
$\lambda_i(P_{\mu},\Z^n)=2/\mu$, $1\leq i\leq n-1$,
$\lambda_n(P_{\mu},\Z^n)=2$, and $\vol(P_{\mu})=\mu^{n-1}$,
$\sur(P_{\mu})=2\mu^{n-2}(n-1+\mu)$. Hence,
\[
\lim_{\mu\to\infty}\frac{\sur(P_{\mu})}{\vol(P_{\mu})\sqrt{\sum_{i=1}^n\lambda_i^2}}=1.
\]
We conclude this section by discussing a generalization of
Theorem~\ref{thm:surface} to arbitrary lattices. For it, we need the
concept of minimal determinants of sublattices: For a lattice $\Lambda$ of
rank $n$, and for $1\leq i\leq n$, we define
\[
 \Dee_i(\Lambda)=\min\left\{\det\Lambda_i:\Lambda_i\textrm{ an }i\textrm{-dimensional sublattice of }\Lambda\right\},
\]
and we write $\Dmin(\Lambda)=\min\left\{\Dee_i(\Lambda)^{1/i}:1\leq i\leq
n\right\}$. Moreover,
\[
 \Lambda^\star=\bigl\{\vx\in\R^n:\ip{\vx}{\vy}\in\Z\textrm{ for all }\vy\in\Lambda\bigr\}
\]
denotes the polar lattice of $\Lambda$. For more information on minimal
determinants we refer to~\cite{schnell} and the references therein.

\begin{theorem}\label{cor:surface_arblat}
Let $K\in\Kon$, $\Lambda$ be a lattice of rank $n$, and let
$\lambda_i=\lambda_i(K,\Lambda)$, $1\leq i\leq n$. Then
\begin{equation*}
   \frac{\sur(K)}{\Dmin(\Lambda^\star)\det\Lambda}\geq \frac{2^{n}}{(n-1)!}
   \sqrt{\sigma_{n-1}\left(\lambda_1^{-2},\dots,\lambda_n^{-2}\right)}.
\end{equation*}
In particular, there exists an absolute constant $c>0$ such that
\begin{equation*}
   \frac{\sur(K)}{\Dee_{n-1}(\Lambda)}\geq \frac{c}{\sqrt{n}}\frac{2^{n}}{(n-1)!}
   \sqrt{\sigma_{n-1}\left(\lambda_1^{-2},\dots,\lambda_n^{-2}\right)}.
\end{equation*}
\end{theorem}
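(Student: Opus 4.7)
The plan is to copy the proof of Theorem~\ref{thm:surface} verbatim through the surface area inequality
\[
\sur(K)\geq\frac{\sqrt{\sigma_{n-1}(\lambda_1^{-2},\dots,\lambda_n^{-2})}}{(n-1)!}\,\sum_{\veps}|\det Z|\,\bigl\Vert Z^{-\intercal}\valpha_{\veps}\bigr\Vert,
\]
where $Z=(\vz_1,\dots,\vz_n)$ is built from linearly independent lattice points $\vz_i\in\lambda_i K\cap\Lambda$, and then replace Lemma~\ref{lem:matrix} by its scaling-free generalization, Remark~\ref{rem:generalmatrix}. The only new ingredient needed is an analysis of $M:=|\det Z|\,Z^{-\intercal}$ when $Z$ has columns in an arbitrary lattice.

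Writing $\Lambda=B\Z^n$ and $\vz_i=B\vw_i$ with $\vw_i\in\Z^n$, so that $Z=BW$, I get
\[
M=\det\Lambda\cdot B^{-\intercal}\bigl(|\det W|\,W^{-\intercal}\bigr).
\]
Since the adjugate $|\det W|\,W^{-\intercal}$ has integer entries, the columns of $M$ lie in $\det\Lambda\cdot B^{-\intercal}\Z^n=\det\Lambda\cdot\Lambda^\star$. Consequently the Gram determinant of the first $i$ columns of $M$ equals $(\det\Lambda)^{2i}$ times the squared determinant of the $i$-dimensional sublattice of $\Lambda^\star$ they span, and is therefore at least $(\det\Lambda)^{2i}\Dee_i(\Lambda^\star)^2$. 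Setting $c:=\det\Lambda\cdot\Dmin(\Lambda^\star)$ and using $\Dmin(\Lambda^\star)^i\leq\Dee_i(\Lambda^\star)$, the rescaled matrix $M/c$ satisfies $\det((M/c)_i^\intercal(M/c)_i)\geq 1$ for every $i$, which is the hypothesis of Remark~\ref{rem:generalmatrix}. That remark then yields $\sum_\veps\Vert M\valpha_\veps\Vert\geq 2^n c$, and feeding this into the surface bound above gives the first inequality of the theorem.

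For the \emph{in particular} statement, I would invoke the standard transference identity $\det\Lambda\cdot\Dee_i(\Lambda^\star)=\Dee_{n-i}(\Lambda)$ (see \cite{schnell}): a primitive $(n-i)$-dimensional sublattice of $\Lambda$ and its orthogonal complement in $\Lambda^\star$ are in bijection with related determinants, and taking $i=1$ yields $\det\Lambda\cdot\Dee_1(\Lambda^\star)=\Dee_{n-1}(\Lambda)$. Hence it suffices to show $\Dmin(\Lambda^\star)\geq(c/\sqrt{n})\,\Dee_1(\Lambda^\star)$ for some absolute constant $c>0$. This follows from Hermite's inequality applied to an arbitrary $i$-dimensional sublattice $M'\subseteq\Lambda^\star$, namely $\Dee_1(\Lambda^\star)^i\leq\lambda_1(M')^i\leq\gamma_i^{i/2}\det M'$, which upon minimization over $M'$ and using the classical bound $\gamma_i=O(i)$ gives $\Dee_i(\Lambda^\star)^{1/i}\geq(c/\sqrt{n})\,\Dee_1(\Lambda^\star)$ uniformly in $i\leq n$. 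Combining the two estimates yields $\det\Lambda\cdot\Dmin(\Lambda^\star)\geq(c/\sqrt{n})\,\Dee_{n-1}(\Lambda)$, and plugging this into the first inequality completes the proof.

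The main obstacle is the second part: the $1/\sqrt{n}$ factor is unavoidable exactly because $\Dmin(\Lambda^\star)$ may be much smaller than $\Dee_1(\Lambda^\star)$ in high dimensions, and controlling this gap cleanly requires the dimension dependence of Hermite's constant. By contrast, the lattice-aware version of Lemma~\ref{lem:matrix} is essentially a bookkeeping exercise once one notices that the adjugate identity puts the columns of $|\det Z|\,Z^{-\intercal}$ into $\det\Lambda\cdot\Lambda^\star$.
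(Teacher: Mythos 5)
Your proposal is correct and follows essentially the same route as the paper: the same reduction to the inequality from the proof of Theorem~\ref{thm:surface}, the same factorization $Z=BW$ showing that the columns of $|\det Z|\,Z^{-\intercal}$ lie in $\det\Lambda\cdot\Lambda^\star$, the same rescaling by $\det\Lambda\cdot\Dmin(\Lambda^\star)$ before invoking Remark~\ref{rem:generalmatrix}, and the same transference identity $\det\Lambda\cdot\Dee_1(\Lambda^\star)=\Dee_{n-1}(\Lambda)$ for the second bound. The only cosmetic difference is that you quote Hermite's constant $\gamma_i=O(i)$ directly, whereas the paper rederives this via Minkowski's theorem applied to the ball $B_i$ together with Stirling's formula.
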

\begin{proof}
Let $\vz_i\in\lambda_i\,K\cap\Lambda$, $1\leq i\leq n$, be $n$ linearly
independent lattice points. Let $Z$ be the matrix with columns
$\vz_1,\dots,\vz_n$. Then, inequality~\eqref{eqn:surf_proof} in the proof
of Theorem~\ref{thm:surface} holds without modification.

In order to apply Lemma~\ref{lem:matrix} in the general setting, we need
to be a bit more careful. Let $B\in\GL(n,\R)$ be a basis of
$\Lambda$, i.e., $\Lambda=B\Z^n$. Then, there exists an integral matrix
$Y\in\Z^{n\times n}$ such that $Z=BY$. Writing $\bar Z=|\det Z|
Z^{-\intercal}$, we get $\bar Z=|\det B|B^{-\intercal}\,|\det
Y|Y^{-\intercal}$, and  $\bar Y=|\det Y|
Y^{-\intercal}$ is an integral matrix. With $\bar Y=(\bar y_1,\ldots,\bar
y_n)$ and using the notation of the proof of Lemma~\ref{lem:matrix}, we
have
\begin{align*}
\sqrt{m_i(\bar Z)}&=\sqrt{\det(\bar Z_i^\intercal \bar Z_i)}\\
 & =\sqrt{|\det B|^{2i}\det\bigl((B^{-\intercal}\bar y_1,\ldots,B^{-\intercal}\bar y_i)^\intercal
    (B^{-\intercal}\bar y_1,\ldots,B^{-\intercal}\bar y_i)\bigr)}\\
 & =(\det\Lambda)^i\det\bigl(\textrm{lattice spanned by }\{B^{-\intercal}\bar y_1,\ldots,B^{-\intercal}\bar y_i\}\bigr).
\end{align*}
Now, since $B^{-\intercal}$ is a basis of the polar lattice
$\Lambda^\star$, the lattice spanned by $\{B^{-\intercal}\bar
y_1,\ldots,B^{-\intercal}\bar y_i\}$ is an $i$-dimensional sublattice of
$\Lambda^\star$, and so we get
\[
\sqrt{m_i(\bar Z)}\geq(\det\Lambda)^i\Dee_i(\Lambda^\star).
\]
Since $m_i(\bar Z)$ is homogeneous of degree $2i$, we need to
multiply~$\bar Z$ by
\[
\left(\det\Lambda\,\min\left\{\Dee_i(\Lambda^\star)^{1/i}:1\leq i\leq
n\right\}\right)^{-1}=\bigl(\Dmin(\Lambda^\star)\det\Lambda\bigr)^{-1}
\]
in order to get a matrix with  $m_i\geq1$, for $1\leq i\leq n$. Hence, by
Remark \ref{rem:generalmatrix}, we get
from \eqref{eqn:surf_proof} that
\begin{equation}
\sur(K)\geq\frac{2^n\Dmin(\Lambda^\star)\det\Lambda}{(n-1)!}
\sqrt{\sigma_{n-1}\left(\lambda_1^{-2},\dots,\lambda_2^{-2}\right)},\label{eqn:surf_proof_al}
\end{equation}
which proves the first claimed inequality.

For the second estimate, let $\Lambda_i$ be an $i$-dimensional sublattice
of~$\Lambda^\star$ with $\det\Lambda_i=\Dee_i(\Lambda^\star)$.
Applying~\eqref{eq:minkowski} to the ball $B_i$ we obtain
\[
 \lambda_1(B_i,\Lambda^\star)^i\leq\lambda_1(B_i,\Lambda_i)^i\leq\frac{2^i}{\vol_i(B_i)}\Dee_i(\Lambda^\star).
\]
In view of $\vol_i(B_i)=\pi^{i/2}/\Gamma(i/2+1)$ and Stirling's
approximation of the $\Gamma$-function, we see that there exists an
absolute constant $c>0$ such that
\[
\Dee_i(\Lambda^\star)^{1/i}\geq\frac{c}{\sqrt{i}}\,\lambda_1(B_i,\Lambda^\star)
=\frac{c}{\sqrt{i}}\min_{\vz\in\Lambda^\star\setminus\{\vnull\}}\|\vz\|=\frac{c}{\sqrt{i}}\Dee_1(\Lambda^\star).
\]
Therefore,
\[
\Dmin(\Lambda^\star)\det\Lambda\geq\frac{c}{\sqrt{n}}\Dee_1(\Lambda^\star)\det\Lambda
=\frac{c}{\sqrt{n}}\Dee_{n-1}(\Lambda),
\]
where the last equality is a well-known relation between a lattice and its
polar lattice (see e.g.~\cite[Cor.~1.3.5]{Martinet}). Using this inequality
and~\eqref{eqn:surf_proof_al}, we obtain the desired estimate.
\end{proof}

\begin{remark}
The investigation of lower bounds for the surface area of a convex body
$K\in\Kgn$ or $K\in\Kn$, in terms of the successive minima of $\D K$,
leads to the question whether there is an analogous statement to
Remark~\ref{obs:cross} for the surface area. We leave this as an open
problem for subsequent studies.
\end{remark}

\section{Bounds for the lattice surface area}\label{s:latticeF(K)}

First of all, we argue that, in general, the lattice surface area cannot
be bounded from below by the successive minima. For a different set of
examples, that exclude lower bounds on the quotient $\lE_{n-1}(P)/\vol(P)$
in terms of the sum of the $\lambda_i(P,\Z^n)$,
see~\cite[Rem.~3.2]{henkschuerwills2005ehrhart}.

\begin{example}\label{exam:no_lb_lsur}
Let $\ell\in\N$ and consider $P_\ell^n=\diag(\ell,1,\ldots,1)C_n^\star$.
Then, the volume of each facet $F$ of $P_\ell^n$ equals
(see~\eqref{eq:vol_fac_cp})
\[
\frac{1}{(n-1)!}\sqrt{\sigma_{n-1}(\ell^2,1,\ldots,1)}=\frac{1}{(n-1)!}\sqrt{1+(n-1)\ell^2}.
\]
Moreover, we have
\[
\det\bigl(\affo(F)\cap\Z^n\bigr)=\bigl\|(1,\ell,\ldots,\ell)\bigr\|=\sqrt{1+(n-1)\ell^2},
\]
and hence $\lE_{n-1}(P_\ell^n)=2^{n-1}/(n-1)!$. So, $\lE_{n-1}(P_\ell^n)$
does not depend on $\ell$, whereas $\lambda_1(P_\ell^n,\Z^n)=1/\ell$ and
$\lambda_2(P_\ell^n,\Z^n)=\cdots=\lambda_n(P_\ell^n,\Z^n)=1$.
\end{example}

Next, we want to prove Theorem~\ref{thm:HSW_nonsymmetric}. It goes along the same
lines as in~\cite[Thm.~1.2]{henkschuerwills2005ehrhart}, and it is based on a
generalized pyramid formula for the volume of a polytope which has been
recently obtained in~\cite{henklinke2014cone}.

\begin{theorem}[Henk \& Linke,~\cite{henklinke2014cone}]\label{thm:cone-volume}
Let $P\in\Kgn$ be a polytope with facets~$F_j$ corresponding to normal
vectors $\va_j$, $1\leq j\leq m$. Furthermore, let $L_k$ be a $k$-dimensional linear subspace. Then
\[
\vol(P)\geq\frac{n}{k}\sum_{\va_j\in
L_k}\vol\bigl(\conv\{\vnull,F_j\}\bigr).
\]
\end{theorem}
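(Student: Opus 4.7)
The plan is to isolate an exact identity via the divergence theorem and then establish a non-negativity statement that uses the centroid hypothesis in an essential way. First, apply the divergence theorem to the vector field $V(\vx) = \pi_{L_k}(\vx)$ (the orthogonal projection onto $L_k$), which has constant divergence $k$. Since $\pi_{L_k}$ is self-adjoint and $\int_{F_j}\vx\,dA = \vol_{n-1}(F_j)\,\cen(F_j)$, the outward flux of $V$ across a facet $F_j$ with outer unit normal $\va_j$ equals $\vol_{n-1}(F_j)\,\ip{\cen(F_j)}{\pi_{L_k}(\va_j)}$. For $\va_j \in L_k$ this becomes $\ip{\cen(F_j)}{\va_j}\,\vol_{n-1}(F_j)$, the support height times the facet area, which coincides with $n\,\vol(\conv\{\vnull,F_j\})$. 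I therefore obtain
\[
 k\,\vol(P) \;=\; n \sum_{\va_j \in L_k} \vol\bigl(\conv\{\vnull,F_j\}\bigr) \;+\; R,
\]
with $R := \sum_{\va_j \notin L_k} \vol_{n-1}(F_j)\,\ip{\cen(F_j)}{\pi_{L_k}(\va_j)}$, so the theorem reduces to the claim $R \geq 0$.

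For the second step, I would prove $R \geq 0$ using the hypothesis $\cen(P) = \vnull$. Decomposing $\int_P \vx\,d\vx$ into the pyramids from the origin over the facets of $P$ shows that this centroid condition is equivalent to the vector identity $\sum_j \ip{\cen(F_j)}{\va_j}\,\vol_{n-1}(F_j)\,\cen(F_j) = \vnull$, and this is the only information beyond $\vnull \in \inte P$ that the proof can exploit. My candidate route uses the one-parameter family of linear maps $T_\lambda = \pi_M + \lambda\,\pi_{L_k}$, where $M = L_k^\perp$ and $\lambda > 0$: the polytope $P_\lambda := T_\lambda(P)$ still has centroid at the origin, satisfies $\vol(P_\lambda) = \lambda^k\vol(P)$, and its facets with outer normal in $L_k$ are exactly the images $T_\lambda F_j$ for $\va_j \in L_k$, whose pyramid volumes scale as $\lambda^k\vol(\conv\{\vnull,F_j\})$. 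Hence the inequality is scale-invariant along $\lambda$, while as $\lambda \to \infty$ the polytope $P_\lambda$ degenerates in the $L_k$ direction toward a cylinder over $\pi_{L_k}(P)$, a regime in which the claim can be verified directly and then transferred back to $\lambda = 1$ by continuity.

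The main obstacle is exactly the inequality $R \geq 0$. A naive pointwise attempt, based on integrating cone-volume inequalities for the slices $(P-\vy)\cap L_k$ over $\vy \in M$, breaks down: for $\vy$ outside $P \cap M$ the origin is not contained in the slice, and the $L_k$-normal contribution to that slice's cone-volume decomposition can exceed its $k$-dimensional volume. Only the global centroid identity produces the cancellation that rescues the inequality after integration over $M$, and a rigorous implementation—whether via the deformation $T_\lambda$ above, an iterated application of Gr\"unbaum's halfspace theorem (Theorem~\ref{thm:gruenbaum}) in $M$-fibres, or a log-Brunn--Minkowski-type concavity of $\lambda \mapsto \vol(P_\lambda)$ coupled with the centroid constraint—is the real content of the proof.
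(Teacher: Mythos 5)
First, a point of reference: the paper states this result as an imported theorem of Henk and Linke~\cite{henklinke2014cone} and gives no proof of it, so there is no in-paper argument to compare against; your proposal has to stand on its own.

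Your first step is correct and is a clean reduction. Applying the divergence theorem to $\vx\mapsto\pi_{L_k}(\vx)$ and using $\ip{\cen(F_j)}{\va_j}\vol_{n-1}(F_j)=n\vol\bigl(\conv\{\vnull,F_j\}\bigr)$ for $\va_j\in L_k$ does give the exact identity
\[
k\,\vol(P)=n\sum_{\va_j\in L_k}\vol\bigl(\conv\{\vnull,F_j\}\bigr)+R,
\qquad
R=\sum_{\va_j\notin L_k}\vol_{n-1}(F_j)\,\ip{\cen(F_j)}{\pi_{L_k}(\va_j)},
\]
so the theorem is equivalent to $R\geq0$. But this is where the proof stops, and $R\geq 0$ is not a technical remainder: it \emph{is} the theorem (the subspace concentration condition for the cone-volume measure of a centered polytope), and the individual summands of $R$ can be negative, so nothing local will do. None of your three candidate routes establishes it. The principal one, the deformation $T_\lambda=\pi_M+\lambda\,\pi_{L_k}$, is self-defeating for exactly the reason you record: both sides of the inequality scale as $\lambda^k$, so the normalized quantity $\tfrac{n}{k}\sum_{\va_j\in L_k}\vol\bigl(\conv\{\vnull,T_\lambda F_j\}\bigr)/\vol(T_\lambda P)$ is \emph{constant} in $\lambda$. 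No information can be transported back from $\lambda\to\infty$ ``by continuity'': the limit of a constant function is its value at $\lambda=1$, which is precisely the quantity you are trying to bound, and the rescaled limit object degenerates to the $k$-dimensional set $\pi_{L_k}(P)$, which is not a configuration in which the claim becomes directly verifiable. The other two routes (iterated Gr\"unbaum in the $M$-fibres, a log-Brunn--Minkowski-type concavity along $T_\lambda$) are only named, not carried out, and you concede that a rigorous implementation ``is the real content of the proof.'' I agree, and that content is missing: the published argument does work with the fibration of $P$ over $L_k$, but it needs the Brunn--Minkowski-type concavity of the fibre-volume function combined with a careful, global use of the centroid condition, and it is substantially longer than a continuity or scaling argument. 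As it stands, your proposal is a correct reformulation followed by a program, not a proof.
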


\begin{proof}[Proof of Theorem \ref{thm:HSW_nonsymmetric}]
\romannumeral1): First, we observe that the desired inequality is
invariant under translations by rational vectors. Moreover, the centroid
$\cen(P)$ of the rational polytope~$P$ has only rational entries. Indeed,
for any triangulation of $P$ by rational simplices $S_1,\ldots,S_t$, we
have
\[
\cen(P)=\sum_{j=1}^t\frac{\cen(S_j)}{\vol(S_j)}\quad\textrm{ and }
\quad\cen(S_j)=\frac1{n+1}\sum_{v \text{ vertex of } S_j}v\in\Q^n.
\]
So, with $\vol(S_j)\in\Q$, for all $j=1,\ldots,t$, we get
$\cen(P)\in\Q^n$. Therefore, after a suitable translation of $P$ we can
assume that $\cen(P)=\vnull$.

Let $P=\bigl\{\vx\in\R^n:\ip{\va_j}{\vx}\leq b_j,1\leq j\leq m\bigr\}$
with $b_j\in\Q_{>0}$ and primitive normal vectors $\va_j\in\Z^n$, i.e.,
there is no lattice point on the interior of the line segment
$[\vnull,\va_j]$. Writing $F_j$ for the facet of $P$ corresponding to the
normal vector $\va_j$, we have $\Vert
\va_j\Vert=\det\bigl(\affo(F_j)\cap\Z^n\bigr)$, $1\leq j\leq m$. Hence,
\begin{equation}
\lE_{n-1}(P)=\frac12\sum_{j=1}^m\frac{\vol_{n-1}(F_j)}{\det\bigl(\affo(F_j)\cap\Z^n\bigr)}
=\frac12\sum_{j=1}^m\frac{\vol_{n-1}(F_j)}{\Vert
\va_j\Vert}.\label{eq:prf_HSW_ns}
\end{equation}

Writing $\lambda_i=\lambda_i(\D P,\Z^n)$, $1\leq i\leq n$, let
$\vv_1,\ldots,\vv_n\in\D P$ be linearly independent points such that
$\lambda_i\vv_i=\vz_i\in\Z^n$, for every $i=1,\ldots,n$, and let
$L_k=\lin\{\vv_1,\ldots,\vv_k\}$, $1\leq k\leq n$, with $L_0=\{\vnull\}$.

Since the centroid of $P$ lies at the origin, it holds $\D
P\subseteq(n+1)P$ (cf.~\cite[Sect.~34]{bonnesenfenchel1987theory}).
Therefore
\[
\D P\subseteq
\Bigl\{\vx\in\R^n:\bigl|\ip{\va_j}{\vx}\bigr|\leq(n+1)b_j,1\leq j\leq
m\Bigr\},
\]
and since $\vz_i\in\lambda_i\,\D P$, it implies that
\begin{align}
(n+1)\,b_j&\geq\frac1{\lambda_i}\bigl|\ip{\va_j}{\vz_i}\bigr|\quad\textrm{for
all}\quad i=1,\ldots,n\quad\textrm{and}\quad j=1,\ldots,m.\label{eqn:bj_bound}
\end{align}
For $k=0,\dots,n$, we define $V_k=\left\{j:\va_j\in L_k^\perp\right\}$.
Then, $V_0=\{1,\ldots,m\}$ and $V_k\subseteq V_{k-1}$ for each
$k=1,\ldots,n$. Furthermore, let $q$ be the smallest index such that
$V_q=\emptyset$. Then, the integrality of the $\va_j$'s and $\vz_i$'s
gives
\[
(n+1)\,b_j\geq\frac1{\lambda_k}\bigl|\ip{\va_j}{\vz_k}\bigr|\geq\frac1{\lambda_k}\quad\textrm{for
all}\quad j\in V_{k-1}\setminus V_k\quad\textrm{and}\quad k=1,\ldots,q.
\]
So, writing $F_j^o=\conv\{\vnull,F_j\}$, we use~\eqref{eq:prf_HSW_ns}
and get the estimate
\begin{align*}
\lE_{n-1}(P)&=\frac12\sum_{k=1}^q\sum_{j\in V_{k-1}\setminus V_k}\frac{\vol_{n-1}(F_j)}{\Vert \va_j\Vert}\\
&\leq\frac{n(n+1)}2\sum_{k=1}^q\lambda_k\sum_{j\in V_{k-1}\setminus V_k}\frac{\vol_{n-1}(F_j)b_j}{\Vert \va_j\Vert\,n}\\
&=\frac{n(n+1)}2\sum_{k=1}^q\lambda_k\left(\sum_{j\in V_{k-1}}\vol(F_j^o)-\sum_{j\in V_k}\vol(F_j^o)\right)\\
&=\frac{n(n+1)}2\left(\lambda_1\vol(P)+\sum_{k=1}^{q-1}(\lambda_{k+1}-\lambda_k)\sum_{\va_j\in L_k^\perp}\vol(F_j^o)\right).
\end{align*}
In the last equality, we have used that $\sum_{j\in
V_0}\vol(F_j^0)=\vol(P)$ and $V_q=\emptyset$.

Finally, by Theorem~\ref{thm:cone-volume} and the monotonicity of the
successive minima, we obtain
\begin{align*}
\frac{\lE_{n-1}(P)}{\vol(P)}&\leq\frac{n(n+1)}2\left(\lambda_1+\sum_{k=1}^{q-1}\frac{n-k}{n}(\lambda_{k+1}-\lambda_k)\right)\\
&=\frac{n+1}2\left(\sum_{k=1}^{q-1}\lambda_k+(n-q+1)\lambda_q\right)\leq \frac{n+1}2\sum_{k=1}^n\lambda_k.
\end{align*}

\romannumeral2): We argue as above with only minor adjustments: Since for
a polytope $P\in\Kgn$ it holds $P\subseteq -nP$
(see~\cite[Sect.~34]{bonnesenfenchel1987theory}), we may
replace~\eqref{eqn:bj_bound} by
$nb_j\geq\bigl|\ip{\va_j}{\vz_i}\bigr|/\lambda_i$ and note that,
for
$n\geq2$, this can never be an equality if $\ip{\va_j}{\vz_i}\geq 0$. Since
there always exists a $j\in\{1,\ldots,m\}$, with $\ip{\va_j}{\vz_1}>0$, and
hence $j\in V_0\setminus V_1$, we see that there is at least one strict
inequality in the argument. This implies that
$\lE_{n-1}(P)/\vol(P)<(n/2)\sum_{i=1}^n\lambda_i(P,\Z^n)$, as
desired.
\end{proof}

It turns out that there are almost tight examples for Theorem~\ref{thm:HSW_nonsymmetric}~\romannumeral2).

\begin{proposition}\label{prop:ex_hsw_nonsymm}
There are lattice polytopes showing that the factor $n/2$ in
Theorem~\ref{thm:HSW_nonsymmetric}~\romannumeral2) cannot be replaced by a
constant smaller than $n^2/\bigl(2(n+1)\bigr)$.
\end{proposition}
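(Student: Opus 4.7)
The plan is to exhibit a family of lattice polytopes $P_k\in\Kgn$ (indexed by integers $k\geq 2$ with $\gcd(k,n+1)=1$) whose ratio $\lE_{n-1}(P_k)/(\vol(P_k)\sum_i\lambda_i(P_k,\Z^n))$ tends to $n^2/(2(n+1))$ as $k\to\infty$. The natural candidate is the pyramid
\[
 P_k=\conv\Bigl(\{n\ve_n\}\cup\bigl\{(s_1k,\ldots,s_{n-1}k,-1):\vs\in\{\pm 1\}^{n-1}\bigr\}\Bigr),
\]
whose base is the $(n-1)$-cube $[-k,k]^{n-1}\times\{-1\}$ and whose apex sits at $n\ve_n$; the apex height $n$ is forced by the pyramid centroid formula $\cen(P)=(A+n\cen(B))/(n+1)$ so that $\cen(P_k)=\vnull$.

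First I would collect the easy data. A pyramid of base-volume $(2k)^{n-1}$ and height $n+1$ has $\vol(P_k)=(2k)^{n-1}(n+1)/n$. From the half-space description
\[
 P_k=\{x\in\R^n:x_n\geq -1,\ (n+1)|x_j|+kx_n\leq nk\text{ for }1\leq j\leq n-1\},
\]
the base facet has primitive outward normal $-\ve_n$ with lattice determinant $1$ and $(n-1)$-volume $(2k)^{n-1}$, while each of the $2(n-1)$ side facets lies in $\pm(n+1)x_j+kx_n=nk$ and, thanks to $\gcd(n+1,k)=1$, has primitive normal of Euclidean norm $\sqrt{(n+1)^2+k^2}$. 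Each side facet is itself a pyramid over an $(n-2)$-cube of side $2k$ with height $\sqrt{k^2+(n+1)^2}$ measured inside the facet hyperplane, so its $(n-1)$-volume equals $(2k)^{n-2}\sqrt{k^2+(n+1)^2}/(n-1)$; the square roots cancel against the lattice determinant, giving a contribution $(2k)^{n-2}/(n-1)$ per side facet. Assembling,
\[
 \lE_{n-1}(P_k)=\tfrac{1}{2}\bigl[(2k)^{n-1}+2(2k)^{n-2}\bigr]=(2k)^{n-2}(k+1).
\]

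The only nontrivial step is pinning down the successive minima. A lattice point $\bb=(p_1,\ldots,p_n)$ lies in $\lambda P_k$ exactly when $p_n\geq -\lambda$ and $(n+1)|p_j|+kp_n\leq nk\lambda$ for $j<n$. For $\lambda<1/n$, integrality forces $p_n=0$ (it must be an integer in $(-1,1)$), and then $|p_j|\leq nk\lambda/(n+1)<1$ whenever $\lambda<(n+1)/(nk)$, ruling out any nonzero lattice point. A direct substitution shows $\ve_j\in\lambda P_k$ for $j<n$ iff $\lambda\geq(n+1)/(nk)$ and $\ve_n\in\lambda P_k$ iff $\lambda\geq 1/n$; for $k\geq 2$ the former is strictly smaller. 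Hence
\[
\lambda_1(P_k,\Z^n)=\cdots=\lambda_{n-1}(P_k,\Z^n)=\frac{n+1}{nk},\qquad \lambda_n(P_k,\Z^n)=\frac{1}{n},
\]
so $\sum_i\lambda_i(P_k,\Z^n)=(k+n^2-1)/(nk)$.

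Combining the three computations,
\[
 \frac{\lE_{n-1}(P_k)}{\vol(P_k)\sum_{i=1}^n\lambda_i(P_k,\Z^n)}=\frac{n^2(k+1)}{2(n+1)(k+n^2-1)}\xrightarrow[k\to\infty]{}\frac{n^2}{2(n+1)},
\]
which proves the proposition. The main obstacle is the lattice-point enumeration: one must rule out any unexpected integer point entering $\lambda P_k$ before $\lambda=(n+1)/(nk)$ or between $(n+1)/(nk)$ and $1/n$, and the clean half-space description of $P_k$ is precisely what reduces that enumeration to the elementary integer inequalities above.
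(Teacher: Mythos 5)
Your construction is correct and is, in essence, the paper's own example specialized to a cube base: the paper takes the pyramid $\conv\bigl\{(n+1)\ell P\times\{-1\},\,n\ve_n\bigr\}$ over an arbitrary reflexive $(n-1)$-polytope $P$ with centroid at the origin, and your $P_k$ is exactly this with $P=[-1,1]^{n-1}$, except that you dilate the base by a parameter $k$ coprime to $n+1$ rather than by a multiple $(n+1)\ell$. The genuine difference lies in how $\lE_{n-1}$ is evaluated. The paper computes it Ehrhart-theoretically, via the lattice-point recursion for pyramids together with the identity $\lE_{n-2}(P)=\frac{n-1}{2}\vol_{n-1}(P)$ for reflexive $P$; this works uniformly for any reflexive base and needs no arithmetic condition on the dilation factor. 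You instead compute $\lE_{n-1}$ directly from its definition as a sum of facet volumes divided by facet-lattice determinants, which is why you impose $\gcd(k,n+1)=1$: it guarantees that the side normals $(\pm(n+1),0,\dots,0,k)$ are primitive, so that the determinant of $\affo(F)\cap\Z^n$ equals their Euclidean norm and the square roots cancel. Both routes give $\lE_{n-1}/\vol=\frac{n}{2(n+1)}\cdot\frac{k+1}{k}$ (respectively $\frac{\ell+1}{\ell}$) and $\sum_i\lambda_i=\frac{k+n^2-1}{nk}$, hence the same limit $n^2/\bigl(2(n+1)\bigr)$; your version is more hands-on and elementary, the paper's buys generality and avoids the coprimality bookkeeping.

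One small correction: $(n+1)/(nk)<1/n$ holds precisely when $k>n+1$, not for all $k\geq2$; for $k\leq n+1$ the first minimum is attained by $\ve_n$ at $1/n$ and your formula for the $\lambda_i$ fails. This is harmless because only $k\to\infty$ matters, but you should restrict to $k\geq n+2$ with $\gcd(k,n+1)=1$ (such $k$ exist in abundance), and note that your argument excluding lattice points for $\lambda<(n+1)/(nk)$ uses $\lambda<1/n$ first to force $p_n=0$, which is legitimate exactly in that range of $k$.
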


\begin{proof}
Let $P$ be a reflexive $(n-1)$-polytope, i.e., $P\in\mathcal{K}^{n-1}$ is
a lattice polytope containing the origin in its interior such that its
polar $P^\star=\bigl\{\vx\in\R^{n-1}:\ip{\vx}{\vy}\leq1,\vy\in P\bigr\}$
is a lattice polytope as well. Reflexive polytopes were introduced
in~\cite{batyrev} (see~\cite{beyhenkwills2007notes} for more information
and references). We assume further that the centroid of $P$ lies at the
origin, and for $\ell\in\N$, we consider the pyramid
$P_\ell=\conv\bigl\{(n+1)\,\ell P\times\{-1\},n\ve_n\bigr\}$. By
construction, the centroid of $P_\ell$ is the origin as well, and the
intersection of $P_\ell$ with the hyperplane
$\{(x_1,\dots,x_n)\in\R^n:x_n=0\}$ is the polytope~$n\ell P$. Since $P$
has exactly one interior lattice point, we get
\[
\lambda_1(P_\ell,\Z^n)=\dots=\lambda_{n-1}(P_\ell,\Z^n)=\frac{1}{n\ell}\quad
\text{ and }\quad\lambda_n(P_\ell,\Z^n)=\frac1{n},
\]
and therefore,
\begin{equation}
\sum_{i=1}^n\lambda_i(P_\ell,\Z^n)=\frac{\ell+n-1}{n\ell}.\label{eq:P_ell}
\end{equation}
In order to compute the quotient $\lE_{n-1}(P_\ell)/\vol(P_\ell)$, we
consider the pyramid $Q'=\conv\bigl\{Q\times\{0\},\ve_n\bigr\}$ over an
$(n\!-\!1)$-dimensional lattice polytope $Q$. As mentioned in the
introduction, the lattice surface area of $Q'$ is the coefficient of order
$n-1$ of the Ehrhart polynomial
$\#(kQ'\cap\Z^n)=\sum_{i=0}^n\lE_i(Q')k^i$, $k\in\N$, of $Q'$. Therein,
the coefficient $\lE_i(Q')$ is a homogeneous functional of degree~$i$, and
$\lE_n(Q')=\vol(Q')$. Since $Q'$ is a pyramid, we obtain (see
e.g.~\cite[Sect.~2.4]{beckrobins2007computing})
\begin{align*}
\#(kQ'\cap\Z^n)&=\#(kQ\cap\Z^{n-1})+\sum_{j=0}^{k-1}\#(jQ\cap\Z^{n-1})\\
 & =\sum_{j=0}^{n-1}\lE_j(Q)k^j+\sum_{j=1}^n\left(\sum_{i=j-1}^{n-1}\lE_i(Q)\binom{i+1}{j}\frac{b_{i-j+1}}{i+1}\right)k^j,
\end{align*}
where $b_j$ are the Bernoulli numbers. In particular, since $b_0=1$,
$b_1=-1/2$,
\[
\lE_{n-1}(Q')=\frac1{n-1}\lE_{n-2}(Q)+\frac12\lE_{n-1}(Q)=\frac1{n-1}\lE_{n-2}(Q)+\frac12\vol_{n-1}(Q).
\]
Since the polytope $P$ is reflexive, we have
$\lE_{n-2}(P)=\bigl((n-1)/2\bigr)\vol_{n-1}(P)$ (see
\cite[Lem.~3.1]{beyhenkwills2007notes}) and thus using the homogeneity of
the Ehrhart coefficients, we get for $Q=\ell P$,
\[
\lE_{n-1}(Q')=\frac{\ell^{n-2}}{n-1}\lE_{n-2}(P)+\frac{\ell^{n-1}}2\vol_{n-1}(P)=\frac{\ell+1}{2}\ell^{n-2}\vol_{n-1}(P).
\]
Moreover, by $P_\ell=(n+1)Q'-\ve_n$, we obtain
\begin{equation*}
\frac{\lE_{n-1}(P_\ell)}{\vol(P_\ell)}=\frac1{n+1}\frac{\lE_{n-1}(Q')}{\vol(Q')}
=\frac{n}{n+1}\frac{\frac{\ell+1}2\ell^{n-2}\vol_{n-1}(P)}{\vol_{n-1}(\ell
P)}=\frac{n}{2(n+1)}\frac{\ell+1}{\ell}.
\end{equation*}
Hence, an inequality of the type $\lE_{n-1}(P_\ell)/\vol(P_\ell)\leq
c_{n,\ell}\sum_{i=1}^n\lambda_i(P_\ell,\Z^n)$ implies, by~\eqref{eq:P_ell}, that
\[
c_{n,l}\geq\frac{n^2}{2(n+1)}\frac{\ell+1}{\ell+n-1},
\]
which goes to $n^2/\bigl(2(n+1)\bigr)$ when $\ell\to\infty$.
\end{proof}

As mentioned in the introduction, Theorem~\ref{thm:HSW_nonsymmetric}
provides relations between the successive minima of a lattice polytope and
the roots of its Ehrhart polynomial (for more information
see~\cite{beckrobins2007computing,henkschuerwills2005ehrhart, OhsugiShibata} and the
references inside). To this end, for a lattice polytope $P\in\Kn$, we
regard the right hand side of~\eqref{eq:ehrhartpoly} as a polynomial in a
complex variable $s\in\C$, and write
\[
\sum_{i=0}^n\lE_i(P)s^i=\prod_{i=1}^n\left(1+\frac{s}{\gamma_i(P)}\right).
\]
Hence, $-\gamma_1(P),\ldots,-\gamma_n(P)$ are the roots of the Ehrhart
polynomial of~$P$ and we have
$\lE_i(P)=\sigma_i\bigl(1/\gamma_1(P),\ldots,1/\gamma_n(P)\bigr)$, $1\leq
i\leq n$. This implies that
$\lE_{n-1}(P)/\vol(P)=\sum_{i=1}^n\gamma_i(P)$, and therefore
Theorem~\ref{thm:HSW_nonsymmetric} can be reformulated as follows (for
convenience we include the result for $o$-symmetric lattice polytopes that
already appeared in~\cite{henkschuerwills2005ehrhart}).

\begin{corollary}\label{cor:roots_appl}
Let $P\in\Kn$ be a lattice polytope.
\begin{enumerate}[i)]
 \item Then
\[
\sum_{i=1}^n\gamma_i(P)\leq\frac{n+1}{2}\sum_{i=1}^n\lambda_i(\D
P,\Z^n),
\]
and the standard simplex $S_n$ shows that the inequality is best possible.
 \item If $P\in\Kgn$ and $n\geq2$, then
\[
\sum_{i=1}^n\gamma_i(P)<\frac{n}{2}\sum_{i=1}^n\lambda_i(P,\Z^n),
\]
and Proposition~\ref{prop:ex_hsw_nonsymm} shows that the factor $n/2$ is of the right order.
 \item If $P\in\Kon$, then
 \[
\sum_{i=1}^n\gamma_i(P)\leq\frac{1}{2}\sum_{i=1}^n\lambda_i(P,\Z^n),
\]
and equality is attained, for example, by $C_n$ and $C_n^\star$.
\end{enumerate}
\end{corollary}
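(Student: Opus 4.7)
The plan is to derive Corollary~\ref{cor:roots_appl} as a purely algebraic reformulation of Theorem~\ref{thm:HSW_nonsymmetric} together with inequality~\eqref{thm:HSW_cs}. The central step is to express $\sum_{i=1}^n \gamma_i(P)$ through the Ehrhart coefficients via Vieta's formulas. Expanding the factorization
\[
\sum_{i=0}^n \lE_i(P)\, s^i = \prod_{i=1}^n\left(1+\frac{s}{\gamma_i(P)}\right),
\]
the coefficient of $s^{n-1}$ on the right-hand side is the elementary symmetric function $\sigma_{n-1}\bigl(1/\gamma_1(P),\ldots,1/\gamma_n(P)\bigr)$. Pulling out the product $\prod_{i=1}^n 1/\gamma_i(P)$, which equals the leading coefficient $\lE_n(P)=\vol(P)$, this simplifies to $\vol(P)\sum_{i=1}^n\gamma_i(P)$. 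Hence I obtain the key identity $\sum_{i=1}^n \gamma_i(P) = \lE_{n-1}(P)/\vol(P)$.

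With this identity available, each part of the corollary follows by dividing the corresponding inequality by $\vol(P)$: part \emph{i)} is Theorem~\ref{thm:HSW_nonsymmetric}~\romannumeral1) (the equality case $S_n$ being inherited from there), part \emph{ii)} is Theorem~\ref{thm:HSW_nonsymmetric}~\romannumeral2), with the asymptotic tightness of the factor $n/2$ already established in Proposition~\ref{prop:ex_hsw_nonsymm}, and part \emph{iii)} is a direct restatement of~\eqref{thm:HSW_cs}, for which $C_n$ and $C_n^\star$ are known equality cases. Since all three underlying inequalities are already proved in the paper, the only genuine work is this symmetric-function identity; I do not anticipate any real obstacle beyond carefully tracking the indexing when extracting the coefficient of $s^{n-1}$ and identifying it with $\vol(P)\sum_i\gamma_i(P)$.
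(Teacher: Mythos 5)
Your proposal is correct and matches the paper's own argument: the authors also factor the Ehrhart polynomial as $\prod_{i=1}^n\bigl(1+s/\gamma_i(P)\bigr)$, read off $\lE_{n-1}(P)/\vol(P)=\sum_{i=1}^n\gamma_i(P)$ from the symmetric functions, and then quote Theorem~\ref{thm:HSW_nonsymmetric} for parts \emph{i)} and \emph{ii)} and the earlier bound \eqref{thm:HSW_cs} for part \emph{iii)}. No gaps.
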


\end{document}